\newtheorem{Theorem}{Theorem}[section]
\newtheorem{Lemma}[Theorem]{Lemma}
\newtheorem{Corollary}[Theorem]{Corollary}
\newtheorem{Proposition}[Theorem]{Proposition}
\newtheorem{Definition}[Theorem]{Definition}
\newtheorem{Example}[Theorem]{Example}
\newtheorem{Remark}[Theorem]{Remark}
\def\V{\mbox{Var}}
\def\R\re
\def\V{\bf V}
\def \re{{\mathbb R}}
\def \0{\lambda_{0}}
\begin{document}
\title[Isoparametric functions and nodal solutions]{Isoparametric functions and nodal solutions of the Yamabe equation }

\author[G. Henry]{G. Henry}\thanks{G. Henry is partially supported by grant PICT-2016-1302 from ANPCyT }
  \address{Departamento de Matem\'atica, FCEyN, Universidad de Buenos 
Aires, Ciudad Universitaria, Pab. I., C1428EHA,
           Buenos Aires, Argentina and CONICET Argentina.}
\email{ghenry@dm.uba.ar}

\subjclass{53C21}

\date{}


\begin{abstract} We prove existence results for nodal solutions of the Yamabe equation that are constant along the level sets of an isoparametric function.  
\end{abstract}

\maketitle

\section{Introduction}

Let $(M,g)$ be a closed Riemanian manifold of dimension $n\geq 3$.  We say that $u:M\longrightarrow\re$ is a solution of the Yamabe equation if $u$ satisfies  for some constant $c$ the equation
\begin{equation}\label{Yamabeeq}
a_n\Delta_gu+s_gu=c|u|^{p_n-2}u,
\end{equation}
 \noindent 
 where $a_n:=\frac{4(n-1)}{n-2}$, $p_n:=\frac{2n}{n-2}$ and $s_g$ is the scalar curvature of $(M,g)$.

The aim of this article is to study the existence of certain {\it{nodal  solutions}} ({\em{i.e.}}, sign changing solutions) of the Yamabe equation that arise by considering isoparametric functions.
 More precisely, we are interested in the following question. Let $f$ be an isoparametric function on a closed Riemannian manifold $(M,g)$ of dimension at least 3. Does there exist a nodal solution of the Yamabe equation that is constant along the level sets of $f$? The purpose of this  paper is to discuss  this problem if the manifold is either a Riemannian product, or each level set of  the isoparametric function has positive dimension.

Let $(M,g)$ be a closed connected Riemannian manifold. A smooth non-constant function $f:M\longrightarrow \re$ is called an {\it isoparametric function} is there exist a smooth function  $b$ and a continuous function $a$  such that
\begin{equation}\label{gradient}\| \nabla f \|^2 = b \circ f,
 \end{equation}
and
\begin{equation}\label{laplacian}
\Delta_{g} f = a\circ f.
\end{equation}

Let $f(M)=[t_{\min},t_{\max}]$. We denote by $M_t$ the level set of the function $f$ corresponding to the value $t$, that is $M_t=f^{-1}(t)$. The sets where $f$ attains the global maximum $M_{t_{\max}}$ and the global minimum $M_{t_{\min}}$  will be denoted by $M_{+}$ and  $M_{-}$, respectively.  The union of these sets is called the {\it focal set} of $f$. When $t$ is a regular value of $f$  we say that the level set  $M_t$ is an {\it isopara\-me\-tric hypersurface}.   The condition  \eqref{gradient} implies that the regular level sets of $f$  are parallel hypersurfaces. On the other hand, by  condition  \eqref{laplacian} under condition \eqref{gradient}, the level sets of $f$  have constant mean curvature. Wang  proved in \cite{Wang} that the only critical values of an isoparametric function on a closed Riemannian manifolds are its global maximum and minimum. In fact this is true for a {\it transnormal function}, {\em{i.e.}}, a non-constant function that satisfies the property \eqref{gradient}.  
Also it was proved in (Theorem A, \cite{Wang}) that the focal sets are actually submanifolds of $M$ (they might be non-connected  and with different dimensions in each connected component) and each re\-gular level set $M_t$ is a tube over either of the focal submanifolds.  Ge and Tang proved in \cite{Ge-Tang} that if the co-dimension of each components of the focal submanifolds of $f$ is at least 2 (which is  actually equivalent to the focal set of any regular level set of $f$ is exactly the focal submanifold $M_-\cup M_+$), then all the level sets are connected and there exists at least one isoparametric hypersurface induced by $f$ that is a minimal hypersurface. Moreover,  if the Ricci curvature is positive this minimal isoparametric hypersurface is unique, see (Corollary 2.1, \cite{Ge-Tang2}). The isoparametric functions with focal submanifolds of co-dimension greater than 1 are called {\it proper isoparametric functions}.  
It was proved in \cite{Ge-Tang2} that  the focal submanifolds  $M_-$ and $M_+$ of an proper isoparametric function are minimal submanifolds. Actually this result was proved by Miyaoka in \cite{Miyaoka} for  transnormal functions.

 The isoparametric functions and isopametric hypersurfaces have attained  lot of interest in the past. When the ambient manifold is a space form  the isoparametric hypersurfaces have constant principal curvatures as Cartan proved in \cite{Cartan}. The classification of isoparametric hypersurfaces in the Euclidean and in the  hyperbolic space was done in the first part of the twentieth-century by Levi-Civita \cite{Levi-Civita} and  Segre \cite{Segre}, and by  Cartan \cite{Cartan}, respectively. The affine hyperplanes,  the embedded spheres $S^{n-1}$ and $S^{n-k}\times \re^{k-1}$ are all the isoparametric hypersurfaces in the Euclidean setting. In the hyperbolic space an isoparametric hypersurface could be an hyperplane, $S^{n-k}\times H^{k-1}$, a geodesic sphere or a horoesphere. 
 But the theory  is much more complex in the case of the sphere  endowed with the standard round metric $(S^n,g^n_0)$. 
Notice that for the Euclidean and the hyperbolic space all the isoparametric hypersurfaces are homogeneous.  On the other hand in the round sphere, besides the homogeneous families there exist non-homogeoneous isoparametric hypersurfaces. The first examples of these kind of hypersurfaces were introduced by Ozeki and Takeuchi  in \cite{Ozeki-Takeuchi-I} and \cite{Ozeki-Takeuchi-II} (see also \cite{Ferus} for a generalization of this family of submanifolds). 
  The classification of isoparametric  hypersurfaces in the sphere was an outstanding   problem that remained open for a long time. Since the article by Cartan \cite{Cartan}  lots of works   have been carried out in order to achieve the total classification of isoparametric hypersurfaces in the sphere,  see for instance the articles  by  Munzner \cite{Munzner} and  \cite{Munzner2},  Abresch \cite{abresch}, Hsiang and Lawson \cite{Lawson}, Ferus et al. \cite{Ferus}, Stolz \cite{Stolz},  Cecil et al. \cite{Cecil}, Immervoll \cite{Immervoll}, Chi \cite{Chi1} and \cite{Chi2}, Miyaoka \cite{MiyaokaAnnals} and  \cite{MiyaokaAnnals2}. In a recent paper Chi \cite{Chi3}, classifies the isoparametric hypersurfaces in the sphere with four distinct principal curvatures and multiplicity pair $(7,8)$. This was the remaining case to obtain a total classification.
However, for general Riemannian manifolds the classification is far for being achieved. 

In the space forms there are many isoparametric functions and hypersurfaces.  Nevertheless, this is not the case for a general Riemannian manifold. It can be seen, by the structural theorem of isoparame\-tric hypersufaces stated above (Theorem A, \cite{Wang}), that there  might be obstructions to the existence of isoparametric functions.  Indeed, by a result of Miyaoka (Theorem 1.1, \cite{Miyaoka})  we know that a closed Riemannian manifold that admits an isoparametric function must be diffeomorphic to the union of two disc bundles over the focal submanifolds.

In \cite{Henry-Petean}, Petean and the author made use of the classification  of isoparametric functions on the sphere and bifurcation theory methods to prove multiplicity results of positive solutions of the Yamabe equation of Riemannian products with the sphere.  As we pointed out at the beginning of this section,  in this article we address the problem of whether an isoparametric function induces a nodal solution of the Yamabe equation.

 Positive solutions of the Yamabe equation are related with the exis\-tence of constant scalar curvature metrics in the conformal class $[g]$ of $g$, which is the set of Riemannian metrics of the form $\phi g$  with $\phi$  a po\-si\-tive smooth function on $M$. More precisely, if $u$ is a positive solution of Equation  \eqref{Yamabeeq},  then the Riemannian metric defined by  $h=u^{p_n-2}g$ has constant scalar curvature $c$. By the resolution of the celebrated  Yamabe problem (Yamabe \cite{Yamabe}, Tr\"udinger \cite{Trudinger}, Aubin \cite{Aubin}, and Schoen \cite{Schoen}), we know that there exists at least one positive solution of the Yamabe equation, and therefore, therein any conformal class there is a Riemannian metric of constant scalar curvature. Since the end of 1960s lots of mathematicians have been studying the space of positive solutions of the Yamabe equation and considerable progress has been made in  understanding  the  e\-xis\-tence and multiplicity of positive solutions of this equation.   For instance, it was shown by Khuri, Marques and  Schoen \cite{Khuri-Marques-Schoen} that the space of positive solutions of the Yamabe equation of a closed Riemannian manifold (unless it is the round sphere) is compact in the $C^2-$topology if the dimension of the manifold is at most 24. On the other hand,  Marques and Brendle \cite{Brendle-Marques1} proved that the compactness does not hold for dimension greater than 24. However, till recently quite little was known about nodal solutions of the Ya\-ma\-be equation.  Among the authors that have studied  nodal solutions of semi-linear elliptic equations on Riemannian manifolds we can mention Ding \cite{Ding}, Hebey and Vaugon \cite{Hebey-Vaugon2}, Holcman \cite{Holcman}, Jourdain \cite{Jourdain}, Djadli and Jourdain \cite{Djadli-Jourdain}, and del Pino et al. \cite{delPino}. But the first general result on the existence of nodal solution of Yamabe equation was obtained by Ammann and Humbert in \cite{Ammann-Humbert}. They introduced the so called second Yamabe constant which is a conformal invariant induced by the second eigenvalue of the conformal Laplacian operator  (see Section \ref{secondyamabe}, Preliminaries). Making use of this invariant  they were able to prove existence results when the conformal class of the Riemannian manifold admits a Riemannian metric of non-negative scalar curvature. Further results on the existence of nodal solutions of the Yamabe equation using the variational  approach of Ammann and Humbert can be found in Petean \cite{Petean2}, El Sayed \cite{ElSayed}, Madani and the author \cite{Henry-Madani}. We refer the reader to the articles by Clapp and Fernandez \cite{Clapp} and Fernandez and Petean \cite{Fernandez-Petean} for recent results on the multiplicity of nodal solutions of the Yamabe equation.

In order to state our main results let us introduce some notations and  definitions.

Let $f:M\longrightarrow \re$ be a function on $M$. We will denoted with $S_f$ the set of functions on $M$ that are constant along the level sets of $f$. 

\begin{Definition} Let $M$ be a closed differentiable manifold. We say that a triplet $(M,g,f)$ is an isoparametric system if $f$ is an isoparametric function on the Riemannian manifold $(M,g)$ and the scalar curvature $s_g$ belongs to $S_f$.
\end{Definition}

We will show the following theorem.

\begin{Theorem}\label{NodalYamabe} Let $(M^m,g)$ be a closed connected Riemannian manifold  and let  $(N^n,h)$ be any closed Riemannian ma\-ni\-fold of constant scalar  curvature  such that $m+n\geq 3$ and $s=s_g+s_h>0$. If $(M,g,f)$ is an isoparametric system then there exists a nodal solution of the  Yamabe equation of $(M\times N,g+h)$ that belongs to $S_f$.
 \end{Theorem}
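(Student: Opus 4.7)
The plan is to look for a solution of \eqref{Yamabeeq} on $(M\times N,g+h)$ of the form $u(x,y)=v(f(x))$, so that $u$ lies automatically in $S_f$. Because $u$ has no $N$-dependence, $\Delta_{g+h}u=\Delta_g u$, and conditions \eqref{gradient}, \eqref{laplacian} give
\begin{equation*}
\Delta_g(v\circ f) \;=\; a(f)\,v'(f) \;-\; b(f)\,v''(f).
\end{equation*}
The isoparametric system assumption forces $s_g\in S_f$, and $s_h$ is constant, so $s=s_g+s_h\in S_f$. Hence the conformal Laplacian $a_{m+n}\Delta_{g+h}+s$ preserves the subspace of functions that factor as $v\circ f$, and finding a nodal solution reduces to finding a sign-changing $v$, constant on the level sets of $f$, satisfying
\begin{equation*}
a_{m+n}\Delta_g v \;+\; s\,v \;=\; c\,|v|^{p_{m+n}-2}v \qquad \text{on } M.
\end{equation*}
The decisive point is that $p_{m+n}=2(m+n)/(m+n-2)<2m/(m-2)=p_m$, so the reduced equation is \emph{strictly subcritical} when regarded as a semilinear elliptic equation on $M$.

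Following the variational approach of Ammann--Humbert to nodal solutions, I would introduce, on the closed linear subspace $H=S_f\cap W^{1,2}(M)$, the Yamabe quotient
\begin{equation*}
Q(w)\;=\;\frac{\int_M\bigl(a_{m+n}|\nabla w|^2 + s\,w^2\bigr)\,dv_g}{\bigl(\int_M |w|^{p_{m+n}}\,dv_g\bigr)^{2/p_{m+n}}},
\end{equation*}
together with the $S_f$-restricted second Yamabe invariant
\begin{equation*}
Y_2^f \;=\; \inf\bigl\{\max\bigl(Q(w^+),Q(w^-)\bigr):\;w\in H,\ w^{\pm}\not\equiv 0\bigr\}.
\end{equation*}
The hypothesis $s>0$ makes the numerator of $Q$ a coercive quadratic form on $W^{1,2}$, so $Y_2^f$ is finite and strictly positive, and sublevel sets of the quotient are bounded in $W^{1,2}$. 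Equivalently, by the coarea formula, everything can be rewritten as a one-dimensional weighted variational problem on the interval $[t_{\min},t_{\max}]$, where the weights come from the volume function of the regular level sets of $f$.

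The main step---and the main obstacle---is to show that $Y_2^f$ is attained by some $w_{\ast}$ with $w_{\ast}^\pm\not\equiv 0$. Here the strict subcriticality $p_{m+n}<p_m$ is decisive: it implies that $W^{1,2}(M)\hookrightarrow L^{p_{m+n}}(M)$ is \emph{compact}, so the standard concentration/bubbling obstruction does not appear. A minimising sequence is bounded in $W^{1,2}$, has a weakly convergent subsequence (with $L^{p_{m+n}}$-strong convergence), and its weak limit lies in $H$ because $S_f$ is a closed linear subspace. Lower semicontinuity of the numerator then delivers a minimiser $w_{\ast}$. The Ammann--Humbert rescaling argument, based on the equality $Q(w_{\ast}^+)=Q(w_{\ast}^-)$, combines the two parts into a single sign-changing critical point $v=\alpha w_{\ast}^+-\beta w_{\ast}^-$ of the reduced equation, and $u(x,y)=v(f(x))$ is the desired nodal solution in $S_f$.

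The delicate technical point to be verified is the non-degeneracy of the minimiser: that neither $w_{\ast}^+$ nor $w_{\ast}^-$ vanishes in the limit, equivalently that $Y_2^f$ is strictly larger than the $S_f$-restricted first Yamabe invariant realised by a positive minimiser. This follows by constructing a suitable sign-changing test function (e.g.\ built from an eigenfunction of the second eigenvalue of $a_{m+n}\Delta_g+s$ restricted to $H$, or from the positive minimiser rescaled on complementary regular sublevel sets of $f$) to bound $Y_2^f$ strictly below the critical threshold; positivity of $s$ and the 1D nature of the reduced problem on $[t_{\min},t_{\max}]$ make this step considerably easier than in the unrestricted Yamabe setting of \cite{Ammann-Humbert}.
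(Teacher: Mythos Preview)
Your core insight is correct and matches the paper: the Yamabe equation on $(M\times N, g+h)$ restricted to $S_f$ becomes a strictly subcritical problem on $M$ because $p_{m+n} < p_m$, so the embedding $H^2_1(M)\cap S_f \hookrightarrow L^{p_{m+n}}(M)$ is compact, and this compactness is what drives the existence of a nodal minimiser without any concentration analysis.

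However, your variational formulation differs from the paper's, and the difference matters. The paper follows Ammann--Humbert precisely: it works with the \emph{isoparametric second $f$-Yamabe constant}
\[
Y^2_f(M\times N,[g+h]_f)\;=\;\inf_{u\in L^{p_{m+n}}_{\geq 0}\cap S_f\setminus\{0\}}\lambda_2^f(g_u)\,\Bigl(\int_M u^{p_{m+n}}\,dv_g\Bigr)^{2/(m+n)},
\]
where $\lambda_2^f(g_u)$ is the second generalised eigenvalue of the conformal Laplacian restricted to $S_f$, given by a min--max over $2$-dimensional subspaces $V\subset H^2_1(M)\cap S_f$. One takes a minimising sequence $u_i$ together with the associated eigenfunctions $v_1^i,v_2^i$ solving $L_{g+h}v_j^i=\lambda_j^f(g_{u_i})\,u_i^{p_{m+n}-2}v_j^i$ weakly on $M$, passes to the limit using the compact embedding, and checks that the limit realises $Y^2_f$. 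One then invokes the Ammann--Humbert machinery (adapted to $S_f$ in \cite{Henry-Madani}) to conclude that at a minimiser $u=|v_2|$ and $v_2$ is a nodal solution of the Yamabe equation.

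Your functional $Y_2^f=\inf\max\bigl(Q(w^+),Q(w^-)\bigr)$ is \emph{not} the same object, and the step you label ``the Ammann--Humbert rescaling argument'' does not apply to it. If $w_\ast$ minimises your functional, then $w_\ast^+$ and $w_\ast^-$ each minimise $Q$ among nonnegative functions supported away from the other's support, so each solves a Dirichlet Yamabe problem on its own nodal domain. Gluing them as $\alpha w_\ast^+-\beta w_\ast^-$ yields a function that satisfies the equation only piecewise; being a weak solution on all of $M$ requires the outward normal derivatives across the nodal set to match, and nothing in your minimisation forces this. The Ammann--Humbert mechanism sidesteps the issue precisely because the eigenfunction equation $L_g v_2=\lambda_2^f(g_u)\,u^{p-2}v_2$ is already a \emph{global} weak equation, and the identification $u=|v_2|$ at the minimiser turns it into the Yamabe equation globally. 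Your outline could conceivably be repaired by treating the nodal set as a free boundary and arguing that optimality of the partition enforces the matching condition (the one-dimensional reduction you mention would help here), but that is a genuinely different argument which you have not supplied, and it is not what Ammann--Humbert do.
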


By Theorem \ref{NodalYamabe} and the comments made above concerning  with the level sets of a proper isoparametric function we obtain the following result.

\begin{Corollary}\label{zeroset} Let $(M,g)$ and $(N,h)$ be closed connected Riemannian manifolds as is the Theorem \ref{NodalYamabe}. Let $(M,g,f)$ be an isoparame\-tric system  such that $f$ is a proper isoparametric function. Then there exists a nodal solution of the Yamabe equation of $(M\times N,g+h)$ whose nodal set \emph{(}{\em{i.e.,}} the zero sets\emph{)} is $S\times N$, where $S$ is an isoparametric hypersurface of $M$ induced  by $f$. 
\end{Corollary}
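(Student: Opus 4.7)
The plan is to apply Theorem \ref{NodalYamabe} directly and then unpack the shape of the nodal set using the structural facts for proper isoparametric functions recalled in the introduction.

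First I would invoke Theorem \ref{NodalYamabe} to obtain a nodal solution $u$ of the Yamabe equation on $(M\times N,g+h)$ with $u\in S_{f}$, where $f$ is viewed as a function on $M\times N$ via the projection $\pi_{M}:M\times N\to M$. Since $u$ is constant along the level sets of $f$, there is a continuous function $\phi:[t_{\min},t_{\max}]\to\re$, smooth on the open interval, with $u(x,y)=\phi(f(x))$. The nodal set of $u$ is therefore
\[
\{u=0\}\;=\;f^{-1}\bigl(\phi^{-1}(0)\bigr)\times N,
\]
and because $u$ is sign-changing, $\phi$ also is, so $\phi$ must vanish somewhere in the open interval $(t_{\min},t_{\max})$.

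Next I would feed in the structural results for proper isoparametric functions highlighted in the introduction. By Wang's theorem the only critical values of $f$ are its endpoints $t_{\min}$ and $t_{\max}$, so every $t\in(t_{\min},t_{\max})$ is a regular value of $f$ and $f^{-1}(t)$ is an isoparametric hypersurface. Since $f$ is proper, the Ge--Tang result ensures that all level sets of $f$ are connected, and so each such $f^{-1}(t)$ is in fact a single connected isoparametric hypersurface. Hence once we know $\phi^{-1}(0)=\{t_{0}\}\subset(t_{\min},t_{\max})$, setting $S:=f^{-1}(t_{0})$ gives a connected isoparametric hypersurface of $M$ with nodal set exactly $S\times N$.

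The main obstacle is therefore to show that $\phi$ has a unique zero, and that it lies strictly inside $(t_{\min},t_{\max})$. I expect both points to come out of the construction used in the proof of Theorem \ref{NodalYamabe}: substituting the ansatz $u=\phi\circ f$ converts the Yamabe PDE into a singular Sturm--Liouville type ODE on $[t_{\min},t_{\max}]$, and the nodal solution produced there (presumably via a second-eigenvalue or min-max argument performed in the subspace $S_{f}$) yields a $\phi$ with exactly one sign change, which by Wang's theorem is automatically at a regular value. Combining this with the connectedness statement above gives the claimed description of the nodal set.
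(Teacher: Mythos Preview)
Your setup is correct, and the paper follows the same outline: write the nodal solution from Theorem \ref{NodalYamabe} as $w=\phi\circ f$, use connectedness of the level sets (properness), and argue that $\phi^{-1}(0)$ is a single interior point. But the justification you offer for the last step has a genuine gap.

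First, ``exactly one sign change'' is not the same as ``$\phi^{-1}(0)$ is a single point in $(t_{\min},t_{\max})$''. Even if $\phi$ changes sign only once, it could additionally vanish at $t_{\min}$ or $t_{\max}$ without changing sign there, so the nodal set would contain $M_{-}\times N$ or $M_{+}\times N$. Wang's theorem only tells you that the endpoints are the critical values of $f$; it says nothing about whether $\phi$ vanishes there. The paper rules this out with the strong maximum principle, using crucially that $s_{g+h}>0$: if, say, $w\geq 0$ on the side containing $M_{+}$ and $w\equiv 0$ on $M_{+}$, then $L_{g+h}w=c\,w^{p_{m+n}-1}\geq 0$ there, and the strong maximum principle forces $w\equiv 0$, a contradiction.

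Second, the paper does not obtain the ``single interior zero'' via any Sturm--Liouville/ODE analysis. It uses a fact recorded in the proof of Theorem \ref{NodalYamabe}: the minimizer of the second isoparametric $f$-Yamabe constant has \emph{exactly two nodal domains} (this is the Ammann--Humbert type statement for the second constant). Since the regular level sets are connected and each separates $M$ into two pieces, two nodal domains force the interior zero set to be a single isoparametric hypersurface $S$. Your speculation (``I expect\ldots'') correctly guesses the variational source, but you should cite the two-nodal-domain property explicitly rather than appeal to an ODE argument you have not carried out.
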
 

In Section \ref{Constantscalarcurvature} (Proposition \ref{constantscalarsubcritical}) we will prove the existence of constant scalar curvature metrics in the $f-$conformal class $[g+h]_f$, {\em{i.e.,}} the set of Riemannian metrics of the form $\phi(g+h)$ with $\phi\in S_f$. Also in Section \ref{Constantscalarcurvature} we will prove some results on the nodal sets of the eigenfunctions of the conformal Laplacian.

When the dimension of each connected components of the level sets of an isoparametric function are greater than or equal to $1$, then we prove the existence of nodal solutions for the Yamabe equation.  Let $d(t)$ be the minimum of the dimensions  of $M_t$'s connected components. Let $k(f):=\min_{t\in[t_{\min},t_{max}]}\{d(t)\}$. We obtain the following result.

\begin{Theorem}\label{NodalOrbitsgeq1} Let $(M^n,g,f)$ be an isoparametric system such that $(M,g)$ is a closed connected Riemannian manifold of dimension $n\geq 3$ with non-negative scalar curvature. If $k(f)\geq 1$,  then there exists a nodal solution of the  Yamabe equation of $(M,g)$ that belongs to $S_f$.  
\end{Theorem}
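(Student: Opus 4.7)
The plan is to exploit the isoparametric structure to reduce the Yamabe equation restricted to $S_f$ to a one-dimensional weighted ODE on $I:=[t_{\min},t_{\max}]$, where the critical Sobolev exponent loses its bite and variational arguments become straightforward, and then to extract a sign-changing solution via a nodal Nehari manifold minimization.

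For $u=v\circ f\in S_f$ with $v$ defined on $I$, conditions \eqref{gradient}, \eqref{laplacian}, and $s_g=s\circ f$ give
\[
\Delta_g u = v'(f)(a\circ f) - v''(f)(b\circ f), \qquad \|\nabla u\|^2 = v'(f)^2 (b\circ f),
\]
so the Yamabe equation for $u$ is equivalent to
\[
a_n\bigl[a(t)v'(t)-b(t)v''(t)\bigr] + s(t)v(t) = c|v|^{p_n-2}v,\qquad t\in I.
\]
By the coarea formula, integrals over $M$ of functions in $S_f$ become weighted 1D integrals with a weight $\omega(t)$ expressible through $\mathrm{vol}(M_t)$ and $b(t)$, so $S_f\cap H^1(M)$ is isometrically identified with a weighted Sobolev space on $I$.

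Next, consider the energy $J(u) = \tfrac12\int_M(a_n|\nabla u|^2 + s_g u^2)\,dv_g - \tfrac{1}{p_n}\int_M|u|^{p_n}\,dv_g$ on $H^1(M)\cap S_f$; by the ODE translation, its critical points in that subspace are Yamabe solutions lying in $S_f$. To produce a sign-changing one I would use the nodal Nehari set
\[
\mathcal{M}_f := \{u\in H^1(M)\cap S_f : u^\pm\not\equiv 0,\ \langle J'(u),u^+\rangle=\langle J'(u),u^-\rangle=0\},
\]
and set $m_f:=\inf_{\mathcal{M}_f}J$. The hypothesis $s_g\geq 0$ makes $J$ coercive on $\mathcal{M}_f$ and forces $m_f>0$, while $\mathcal{M}_f$ is nonempty because any smooth sign-changing function in $S_f$ (easily built on $I$) can be rescaled independently on its positive and negative parts to land in $\mathcal{M}_f$.

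The crux is compactness. Because the coarea reduction takes place in one dimension, the weighted Sobolev embedding $H^1(I,\omega)\hookrightarrow L^{p_n}(I,\omega)$ is compact; hence a minimizing sequence for $J|_{\mathcal{M}_f}$ converges strongly in $L^{p_n}$ to a minimizer $u_0$, which by a standard Nehari argument is a weak sign-changing solution of the Yamabe equation in $S_f$, and ODE regularity then yields smoothness of $v_0$, hence of $u_0 = v_0\circ f$. The hypothesis $k(f)\geq 1$ enters precisely here: when every connected component of every level set of $f$ has positive dimension, the focal submanifolds have codimension at most $n-1$, so $\omega$ is continuous and positive in the interior of $I$ with integrable, mildly polynomial behavior at the endpoints, keeping the weighted 1D Sobolev theory in the standard regime. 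The main obstacle I anticipate is ruling out degeneration of the minimizing sequence—vanishing of $\|u_n^\pm\|_{L^{p_n}}$ or concentration at $t_{\min}, t_{\max}$—together with pinning down the quantitative behavior of $\omega$ near the focal values; these are precisely the points where both $k(f)\geq 1$ and $s_g\geq 0$ do the real work.
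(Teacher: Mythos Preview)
Your route is different from the paper's in two respects. For compactness, the paper does not pass through a weighted one-dimensional Sobolev space; instead it proves directly (Lemma~\ref{fSobolevembedding}) that $H^2_{1,f}(M)\hookrightarrow L^{p_n}(M)$ is compact whenever $k(f)\geq 1$, by covering $M$ with Fermi-type charts $\varphi(W)=U\times V$ with $U\subset\re^{k(x)}$ tangent to the level set and $V\subset\re^{n-k(x)}$ transverse, noting that functions in $S_f$ depend only on $V$, and applying the Euclidean Rellich--Kondrakov theorem in the effective dimension $n-k(x)\leq n-1$, for which $p_n$ is strictly subcritical. For the variational step, the paper does not use a nodal Nehari set; once compactness is available it simply reruns the proof of Theorem~\ref{NodalYamabe}, minimizing the isoparametric second $f$-Yamabe constant $Y^2_f(M,[g]_f)=\inf_{h\in[g]_f}\lambda_2^f(h)\,\mathrm{vol}(M,h)^{2/n}$ over generalized metrics in the Ammann--Humbert fashion. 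Your Nehari minimization is a legitimate and arguably more self-contained alternative, while the paper's choice keeps the argument parallel to the product case and to the second-Yamabe-constant framework used throughout.

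One point in your outline needs sharpening. Your sentence ``the focal submanifolds have codimension at most $n-1$, so $\omega$ is \dots\ integrable, mildly polynomial'' does not isolate the mechanism: $\omega$ is always integrable with polynomial endpoint behavior, including when $k(f)=0$, yet compactness fails there. In arc-length $r$ from the focal set $M_-$ of dimension $d_-$, the weight is $\omega(r)\sim r^{\,n-d_--1}$, so your weighted space near the endpoint is exactly radial $H^1$ on a ball in $\re^{\,n-d_-}$; the embedding into $L^{p_n}$ is compact if and only if $p_n<\dfrac{2(n-d_-)}{n-d_--2}$, i.e.\ if and only if $d_-\geq 1$. That inequality is the quantitative content of $k(f)\geq 1$ you need to make the ``concentration at $t_{\min},t_{\max}$'' obstacle disappear; with it in hand, the rest of your Nehari argument goes through.
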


With the same hypothesis of Theorem \ref{NodalOrbitsgeq1} we will prove in Section \ref{critical equation} (Coro\-llary \ref{constantscalarcritical}) the existence of constant scalar curvature Riemannian metrics in the $f-$conformal class $[g]_f$.



\section{Preliminaries}

\subsection{Examples}
In this section we  introduce a few  examples and review some constructions of families of isoparametric functions on closed Riemannian manifolds.

The simplest example of an isoparametric function on a closed Riemannian manifold is the following.
\begin{Example} Let $S^n\subset \re^{n+1}$  be the embedded $n-$dimensional sphere of radius 1, that is $S^n:=\{x\in \re^{n+1}:x_1^2+\dots +x_{n+1}^2=1\}$. Let us consider $S^n$ endowed with the round metric $g^n_0$ of curvature $1$. We define the function $f_i:S^n\longrightarrow \re$ by $f_i(x):=x_i$. These functions are isoparametric on $(S^{n},g^n_0)$. Indeed, $\|\nabla_{g_0^n}f_i\|^2_{g^n_0}=1-f_i^2$ and $\Delta_{g^n_0}f_i=-nf_i$.  The focal submanifolds of $f_i$ are $S^n_{-}\cup S^{n}_{+}=\{-e_i, e_i\}$ and the isoparametric hypersurfaces induced by $f_i$ are the $n-1$-dimensional spheres $S^n_t=\{x\in S^{n+1}:<x,e_i>=t\}$.     
\end{Example}

\begin{Example}
Let $m,n$ be integers such that $m\geq 1$, $n\geq 0$ and $m+n\geq 2$. Let consider the function $f_{m,n}$ in $S^{m+n}$  defined by $f(x):=x_1^2+\dots+x_m^2-x_{m+1}^2-\dots-x_{m+n+1}^2$. We have $\|\nabla_{g_0^{m+n}}f_{m,n}\|^2_{g^{m+n}_0}=4(1-f_{m,n}^2)$ and    $\Delta_{g^{m+n}_0}f_{m,n}=(m-n-1)-2(n+1)f_{m,n}$. Therefore, $f_{m,n}$ is an isoparametric function. Notice that when $m+n=2$ the focal submanifolds and the induced isoparametric hypersurfaces of $f_{m,n}$  are non-connected. In particular,  when $m=2$ and $n=0$, $S^2_+=S^1\times \{0\}$ and $S^2_-=\{-e_3,e_3\}$ which have different dimensions, and the isoparametric hypersurfaces induced by $f_{2,0}$ are diffeomorphic to two copies of $S^1$.  
\end{Example}

The preceding examples are particular cases  of an  important ge\-neral cons\-truction that we briefly describe below. When the manifold  has a large isometry group this construction provides  many  examples  of isoparametric functions. Note that in both examples the induced isoparame\-tric hypersurfaces are homogeneous, {\em i.e.,} there exists a subgroup of the isometry group of the ambient manifold that acts transitively on the hypersurface. 
   Let $(M,g)$ be  a closed manifold and let $G$ be a compact subgroup of the isometry group  of $(M,g)$ that acts on $M$  
with co-homogeneity one. Let  $\pi: M\longrightarrow M/G$ be the projection map. If $u$ is a smooth function on $M/G$, then $u\circ \pi$ is an isoparametric function of $(M,g)$, see for instance (Proposition 2.4, \cite{Ge-Tang}). 
Recall that not all the  isopara\-me\-tric functions induce homogeneous isoparametric hypersurfaces (see for instance \cite{Ozeki-Takeuchi-I}, \cite{Ozeki-Takeuchi-II} and \cite{Ferus}). Although for some manifolds this method provides the only possible examples.

There is another method to construct isoparametric functions when the ambient manifold is the total space  of a Riemannian submersion. Assume that $\pi:(M,g)\longrightarrow (N,h)$ is a Riemannian submersion such that all the fibers are totally geodesics.  So, if $f$ is an isoparametric function on $(N,h)$, then $f\circ \pi$ is an isoparametric function on $(M,g)$, see  (Proposition 2.5, \cite{Ge-Tang}).

\subsection{Second Yamabe constant}\label{secondyamabe}

It will play a key role in the proof of Theorem \ref{NodalYamabe} and Theorem \ref{NodalOrbitsgeq1} the conformal invariant so called {\it the second Yamabe cons\-tant}, actually a modification of it (see Section \ref{kIsoYamabeconstant}). The second Yamabe cons\-tant  $Y^2(M,g)$ was introduced by Ammann and Humbert in \cite{Ammann-Humbert} in order to study the existence of nodal solutions of the Yamabe equation. 

The conformal Laplacian $L_g$ of a closed Riemannian manifold $(M,g)$ of dimension $n\geq 3$ is the linear elliptic operator that appears in the left hand side of the Yamabe equation \eqref{Yamabeeq}, that is    
 $$L_g:=a_n\Delta_g+s_g.$$ 
It is well known that its spectrum is a non-bounded and non-decreasing sequence of eigen\-va\-lues.  We  write the spectrum of $L_g$  as the sequence   $\{\lambda_i(g)\}$  where each eigenvalue appears repeated according to its multiplicity.

The second Yamabe constant of $(M,g)$ is defined as follows.

\begin{equation}\label{2Yamabeconstant}
Y^2(M,g):=\inf_{h\in[g]}\lambda_2(h)vol(M,h)^{\frac{2}{n}},                                                                                                                  \end{equation}
where $vol(M,h)$ is the volume of $M$ with respect to the Riemannian metric $h$.

This invariant is never achieved by a Riemannian metric if the manifold is connected. However,  it could  be attained if instead of $[g]$  we consider the  {\it conformal class of generalized metrics}, {\em{i.e.}}, symmetric tensors of the form $\phi g$, where $\phi\in L^{p_n}(M)$ is a non-negative  function that is not the  zero function. Ammann and Humbert proved in \cite{Ammann-Humbert} that if  $Y^2(M,g)$ is non-negative and it is attained by a generalized metric, then there exists a nodal solution of the Yamabe equation (see Theorem 1.6, \cite{Ammann-Humbert}).

\section{$f$-conformal class and isoparametric systems}

Let $(M,g)$ be a closed Riemannian manifold and $f$ a smooth function on $M$.  
   We say that a Riemannian metric $h$ belongs to the {\it $f-$conformal class of $g$}  if $h=\phi g$ with $\phi\in S_f$. We will denote this subset of $[g]$ by $[g]_f$.

When $f$ is an isoparametric function the $f-$conformal class of  $g$ is the subset of conformally equivalent metrics  to $g$ such that  $f$ is an isoparametric function as well. More precisely, we have the following observation.

\begin{Proposition}
Let $f$ be an isoparametric function of $(M,g)$ and let $h\in[g]$. Then,  $f$ is an isoparametric function  on $(M,h)$ if and only if $[h]_f=[g]_f$.
\end{Proposition}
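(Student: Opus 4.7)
My plan is to reduce the biconditional to a cleaner form. Write $h=\phi g$ with $\phi>0$ smooth, which is possible since $h\in[g]$. The equality $[h]_f=[g]_f$ is then tautologically equivalent to $\phi\in S_f$, because $S_f$ is closed under positive products and reciprocals, so $[g]_f$ translates onto $[h]_f$ precisely when the conformal factor $\phi$ relating $h$ to $g$ is itself an element of $S_f$. All the geometric content is therefore in proving that $\phi\in S_f$ if and only if $f$ is isoparametric with respect to $h$.

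For the ``if'' direction I will write $\phi=\psi\circ f$ and directly verify conditions \eqref{gradient} and \eqref{laplacian} for $h$. Since $\|\nabla f\|_h^2=\phi^{-1}\|\nabla f\|_g^2=b(f)/\psi(f)$, the gradient condition is satisfied with a new function of $f$. For the Laplacian I will invoke the standard conformal transformation rule, which expresses $\Delta_h f$ as a dimensional linear combination of $\phi^{-1}\Delta_g f$ and $\phi^{-2}g(\nabla_g\phi,\nabla_g f)$; the inner product equals $\psi'(f)\,b(f)$ because $\nabla_g\phi=\psi'(f)\nabla_g f$, so the whole expression is again a function of $f$, and $f$ is isoparametric on $(M,h)$.

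For the ``only if'' direction I will use only the gradient condition. Assuming $\|\nabla f\|_h^2=\tilde b\circ f$ and combining with $\|\nabla f\|_h^2=\phi^{-1}(b\circ f)$ gives $\phi\cdot(\tilde b\circ f)=b\circ f$ pointwise on $M$. On each regular level set $M_t$ one has $b(t)\neq 0$, so $\phi\equiv b(t)/\tilde b(t)$ is constant on $M_t$. By Wang's theorem, which is recalled in the introduction, the only critical values of $f$ are $t_{\min}$ and $t_{\max}$, hence the regular level sets cover an open dense subset of $M$, and continuity of $\phi$ extends the conclusion to the focal submanifolds $M_\pm$, giving $\phi\in S_f$.

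The only mildly delicate step, which I consider the main obstacle, is this extension to the focal submanifolds where the gradient condition degenerates. It is handled by a density-plus-continuity argument: any two points of $M_+$ are limits of pairs of points lying on a common regular level set $M_{t_k}$ with $t_k\to t_{\max}$, and $\phi$ is constant on each $M_{t_k}$, so $\phi$ takes the same value at the two limits; the same argument applies to $M_-$.
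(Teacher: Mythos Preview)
Your proof is correct and follows essentially the same route as the paper's own argument: both directions hinge on the identity $\|\nabla f\|_h^2=\phi^{-1}(b\circ f)$, the conformal transformation law for the Laplacian, Wang's result that the only critical values are $t_{\min}$ and $t_{\max}$, and a continuity argument using sequences on regular level sets approaching points of the focal submanifolds. The only differences are organizational---you explicitly reduce to the equivalence $\phi\in S_f$ at the outset and quote the conformal Laplacian rule rather than redoing the local-coordinate computation---but the substance is the same.
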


\begin{proof} 
Assume that $[h]_f=[g]_f$. Then $h\in[g]_f$ and   $h=\phi g$ where $\phi=\psi\circ f$ for some smooth positive function $\psi:[t_{\min},t_{\max}]\longrightarrow \re$. Using a local coordinate system  we get

\begin{equation}\label{normgradient}\|\nabla_h f\|_h^2=\sum_{1 \leq i,j\leq n}\phi^{-1}g^{ij}\frac{\partial f}{\partial x_i}\frac{\partial f}{\partial x_j}
\end{equation}

$$=\phi^{-1}\|\nabla_g f\|_g^2=\big(\psi(f)\big)^{-1}b(f)\in S_f,$$
where to obtain the last equality we used that $f$ is an isoparametric function of $(M,g)$.  

Now we compute the Laplace-Beltrami operator of $f$ with respect to the metric $h$ in a local coordinate system. We obtain,

$$\Delta_hf=-\frac{1}{\sqrt{h}}\sum_{1 \leq i,j\leq n}\frac{\partial}{\partial x_j}\Big\{ h^{ij}\sqrt{h}\frac{\partial f}{\partial x_i}\Big\}$$
$$=-\frac{1}{\phi^{\frac{n}{2}}\sqrt{g}}\sum_{1 \leq i,j\leq n}\frac{\partial}{\partial x_j}\Big\{\phi^{\frac{n-2}{2}} g^{ij}\sqrt{g}\frac{\partial f}{\partial x_i}\Big\}$$
$$=-\frac{(n-2)\psi'(f)}{2\phi^2}\sum_{1 \leq i,j\leq n}g^{ij}\frac{\partial f}{\partial x_j}\frac{\partial f}{\partial x_i}+\frac{1}{\phi}\Delta_g(f).$$
Therefore by \eqref{gradient} and \eqref{laplacian} we have 

\begin{equation}\label{laplacian2}\Delta_hf=-\frac{(n-2)\psi'(f)}{2\phi^2}\|\nabla_gf\|^2_g+\frac{1}{\phi}\Delta_g(f)\in S_f,\end{equation}
and  the desired implication follows.

Now assume   that $f$  is  an isoparametric function on $(M,h)$ as well.  By \eqref{normgradient}  we have that $\phi^{-1}b(f)\in S_f$ for some smooth function $b$, where $\phi$ is the smooth positive function that satisfies $h=\phi g$. 
Let us assume that there exist two points $P_1, P_2$ in $M$  that belong to $M_{t_0}$  ($t_0\in [t_{\min},t_{\max}]$ ) such that  $\phi(P_1)\neq \phi(P_2)$. Then necessarily $b(t_0)$ must be zero, which implies that $t_0$ is a critical value of $f$. Since the only critical values are the global minimum and maximum,  we  have either $t_0=t_{\min}$ or $t_0=t_{\max}$.  Therefore, $\phi$ is constant along the regular level sets of $f$. Assume that $P_1$ and $P_2$ belong to $M_+$, that is  $t_0=t_{\max}$.   Let $\{t_k\}$ be an increasing sequence such that   $\lim_{k\to \infty} t_k=t_{\max}$. Each regular level set of $f$ is a tube over $M_+$. So, let us consider two sequences namely $\{Q_{i,k}\}$ ($i=1,2$) such that $Q_{i,k}\in M_{t_k}$ and  $\lim_{k\to\infty }Q_{i,k}=P_i$.  Since $\phi$ is smooth and  $\phi(Q_{1,k})=\phi(Q_{2,k})$ for all $k$, then $\phi(P_1)=\phi(P_2)=t_{\max}$ which is  a contradiction. In the same way we obtain a contradiction if we assume that $P_1,P_2$ belong to $M_-$. Hence, $\phi\in S_f$ and the proposition follows.

\end{proof}

The conformal Laplacian is an elliptic second order conformally invariant operator.   It satisfies the following  invariance property. Let $h=u^{p_n-2}g$, then for  any smooth function $v$ we have 
\begin{equation}\label{invpropLg}
L_h(v)=u^{1-p_n}L_g(uv).
\end{equation}

 It is well known that the scalar curvature of a Riemannian metric  of the form $h=u^{p_n-2}g$ satisfies the equation
 \begin{equation}\label{scalarcurvatureeq}
 s_h=u^{1-p_n}\big(a_n\Delta_gu+s_gu\big)=u^{1-p_n}L_g(u).
 \end{equation} 
 If $f$ is an isopametric function, $S_f$  is an invariant subspace of the Laplace-Beltrami operator  of $(M,g)$.  Indeed,  by a straightforward computation we  have that 
$$\Delta_g (\psi\circ f)=-\psi''(f)b(f)+\psi'(f)a(f)\in S_f.$$ 
If in addition we assume that $s_g\in S_f$,  we have that $S_f$ is an invariant subspace of  $L_g$. Therefore, from Equation \eqref{scalarcurvatureeq} we obtain that the scalar curvature of any metric that belongs to $[g]_f$ is constant along the level sets of $f$. That is, if $(M,g,f)$ is an isoparametric system, then  $(M,h,f)$ is an isoparame\-tric system for any $h\in[g]_f$ as well.

\begin{Proposition}\label{propinvLaplacian} Let $f$ be an isoparametric function on $(M,g)$. $S_f$ is an invariant subspace of $L_h$ for any $h\in [g]_f$ if and only if there exists $h_0\in [g]_f$ such that $s_{h_0}\in S_f$.
\end{Proposition}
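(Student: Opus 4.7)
The plan is to exploit the chain of implications collected in the discussion immediately preceding the proposition, namely that (i) $f$ isoparametric on a metric $\tilde g$ implies $\Delta_{\tilde g}$ preserves $S_f$, and (ii) if additionally $s_{\tilde g}\in S_f$, then every metric in $[\tilde g]_f$ has scalar curvature in $S_f$. The previous proposition will let me move freely between metrics in $[g]_f$ while keeping $f$ isoparametric.

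The forward direction is essentially a one-liner. Taking $h_0=g\in[g]_f$, the constant function $1$ belongs to $S_f$, so by the hypothesis of $L_g$-invariance of $S_f$ I get $s_g = L_g(1)\in S_f$, so $h_0=g$ does the job.

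For the converse, assume there is $h_0\in[g]_f$ with $s_{h_0}\in S_f$, and write $h_0=(\psi_0\circ f)\,g$ with $\psi_0>0$. I first note that $[h_0]_f=[g]_f$: multiplying $h_0$ by any $\phi\in S_f$ gives $(\phi\cdot(\psi_0\circ f))g\in[g]_f$, and conversely $\tilde\phi g=(\tilde\phi/(\psi_0\circ f))h_0\in[h_0]_f$. By the previous proposition, $f$ is isoparametric on $(M,h_0)$, so $(M,h_0,f)$ is an isoparametric system. Applying the discussion that precedes the proposition with $h_0$ in the role of $g$ yields that every $h\in[h_0]_f=[g]_f$ satisfies $s_h\in S_f$, and (again by the previous proposition) $f$ remains isoparametric on $(M,h)$.

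Fixing an arbitrary $h\in[g]_f$, invariance of $S_f$ under $L_h=a_n\Delta_h+s_h$ then splits into two pieces: $\Delta_h$ preserves $S_f$ because the computation $\Delta_h(\psi\circ f)=-\psi''(f)\,b_h(f)+\psi'(f)\,a_h(f)$ from the text applies verbatim with $h$ in place of $g$, and multiplication by $s_h$ preserves $S_f$ because the product of two functions constant along level sets of $f$ is again constant along level sets of $f$. There is really no serious obstacle here; the only piece of bookkeeping one must not skip is the identity $[h_0]_f=[g]_f$, without which one cannot transport the scalar-curvature information from the given representative $h_0$ to an arbitrary $h\in[g]_f$.
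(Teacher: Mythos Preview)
Your proof is correct and essentially follows the paper's approach. The forward direction is identical: apply $L_g$ to the constant $1$. For the converse, the paper proceeds slightly more directly: after noting that $S_f$ is $L_{h_0}$-invariant, it writes any $h\in[g]_f$ as $h=u^{p_n-2}h_0$ with $u\in S_f$ and invokes the conformal covariance formula $L_h(v)=u^{1-p_n}L_{h_0}(uv)$ in one line, rather than (as you do) first propagating $s_h\in S_f$ to every $h$ and then checking $\Delta_h$ and multiplication by $s_h$ separately. Both routes rely on the same preparatory discussion and arrive at the same conclusion; yours is slightly more explicit, the paper's slightly more economical.
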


\begin{proof}
Assume that there exists  $h_0\in [g]_f$  such that  $s_{h_0}$  is constant along the level sets of $f$. Then $f$ is an isoparametric function of $(M,h_0)$ and by the comments above it follows that $S_f$ is an invariant subspace of $L_{h_0}$. Any $h\in[g]_f$ is of the form $h=u^{p_n-2}h_0$ with $u\in S_f$. Thus, from Equation \eqref{invpropLg} we get  $L_h(S_f)\subseteq S_f$. 

On the other hand, if $L_h(S_f)\subseteq S_f$,  then $s_h=L_h(1)\in S_f$.      
\end{proof}

How usual are isoparametric systems? Proposition \ref{propinvLaplacian} tells us that if there exists one isoparametric system on $M$ then there exist infinite ones. The simplest example of an isoparametric system arise when $(M,g)$ is a Riemannian manifold of constant scalar curvature admitting an isoparametric function $f$. In this situation, if $\psi$ is a smooth strictly monotone function on the range of $f$, then for any $h\in [g]_f$ $(M,h,\psi\circ f)$ is an isoparametric system.

\begin{Example}\ Let $(M,g)$ be a closed Riemannian manifold and $G$ a compact subgroup of isometries  acting on $M$ with co-homogeneity one. Let us consider the family of isoparametric functions induced by the projection map. Since the group $G$ acts transitively by isometries on each level set of the induced isoparametric function, the scalar curvature is constant along them.   
\end{Example}

In the next sections we are going to  see results about the existence of constant scalar curvature metric in the   
$f-$conformal class. Let us now discuss this in the setting of the example above.   
Let $f$ be the induced isopametric function on $(M,g)$ by a co-ho\-mo\-geneity one action of a compact subgroup of isometries $G$. Let us consider the set of $G-$invariant conformally equivalent metrics to $g$, namely $[g]_G$.
 Let us consider the Yamabe functional $Y$ restricted to the  $[g]_G$. That is,
 $$h\in[g]_G\longrightarrow Y(h):=\frac{\int_Ms_hdv_h}{vol(M,h)^{\frac{n-2}{n}}}=\frac{\int_ML_g(u)udv_g}{\|u\|_{p_n}^2}$$  
\noindent
if $h=u^{p_n-2}g$.
The infimum of the Yamabe functional restricted to $[g]_G$  is the so called $G-$equivariant Yamabe constant $Y_G(M,g)$. Note that when $G$ is the trivial group $Y_G(M,g)$ is just the Yamabe constant of $(M,g)$. The problem whether this infimum is attained or not is known as the $G-$equivariant Yamabe problem. 
If $Y_G(M,g)$  is attained then there exists a minimizing $G-$invariant metric of constant scalar curvature conformal to $g$ (see for instance \cite{Hebey-Vaugon}). This minimizing metric is of the form $h_0=\phi g$ with $\phi$ a $G-$invariant function, hence it belongs to $[g]_f$.  The $G-$equivariant Yamabe constant is attained if the following strict inequality holds 
\begin{equation}\label{HebeyVaugonineq}
 Y_G(M,[g]_G)<Y(S^n)\Lambda_{G}^{\frac{2}{n}},
\end{equation}
\noindent
where $Y(S^n)=n(n-1)vol(S^n,g^n_0)^{\frac{2}{n}}$ is the Yamabe constant of the round sphere $(S^n,g^n_0)$ and $\Lambda_{G}$ is the infimum of the cardinal of the orbits induced by the action of $G$ on $M$.  
Assuming the  positive mass theorem\footnote{Recently a proof of the positive mass theorem was announced independently by Schoen and Yau in \cite{Schoen-Yau} and  by Lohkamp in a series of papers (see \cite{Lohkamp1} and  \cite{Lohkamp2}).} by the works of Hebey and Vaugon \cite{Hebey-Vaugon} and Madani \cite{Madani} the Inequality \eqref{HebeyVaugonineq} holds, unless the action has a fixed point or the manifold is conformal to the round sphere. Therefore, the $G-$equivariant constant is attained. Note that the Inequality \eqref{HebeyVaugonineq} holds trivially if either the orbits are not finite or the equivariant Yamabe constant is non-positive (which is equivalent to the existence of a metric with non-positive scalar curvature in $[g]$).

Another example of an isoparametric system is provided by the fo\-llowing construction.

\begin{Example}
Let $(N^n,h)$ be a closed Riemannian manifold of dimension $n\geq 2$.  Let $\varphi$ be an smooth positive function on the $1-$dimensional sphere and consider the warped product $(N\times S^1, \varphi h+dt^2)$. Let $\pi_{S^1}:N\times S^1\longrightarrow S^1$ be the projection with respect to $S^1$.  Since the fibers of $\pi_{S^1}$ are totally umbilical, it is well known that there exists a family of isopametric functions on $(N\times S^1, \varphi h+dt^2)$ such that the induced isoparametric hypersurfaces  are $N\times \{t\}$. Let $f$ be one isoparametric function of this family.  Computing the  scalar curvature of the warped product, see (Theorem 2.1, \cite{Dobarro-LamiDozo}), we obtain 
$$s_{\varphi h+dt^2}=\varphi^{-\frac{n+1}{4}}\Big(\frac{4n}{n+1}\Delta_{dt^2}(\varphi^{\frac{n+1}{4}})+s_h\varphi^{\frac{n-3}{4}} \Big).$$
\noindent
Therefore, if $h$ is a metric of constant scalar curvature then $s_{\varphi h+dt^2}\in S_f$. 
\end{Example}

\begin{Remark}\label{ExnonIsosystem} The scalar curvature of a Riemmannian manifold is not necessarily constant along the level sets of an isoparametric function. For instance, let $(M,g)$ be a closed Riemannian manifold  and let $(N,h)$ be a Riemannian manifold with non-constant scalar curvature.  If $f$ is an isoparametric function on $(M,g)$ then so is it as a function on the Riemannian product $(M\times N, g+h)$. However, the level sets of $f$ are $\{M_t\times N\}$,  thus $s_{g+h}\notin S_f$.

\end{Remark}

\section{Constant scalar curvature metrics on $f-$conformal classes}\label{Constantscalarcurvature}

In this section we prove the existence of constant scalar curvature metrics in some $f-$conformal classes of a Riemannian product. When the Riemannian manifold does not admit a conformal metric of positive scalar curvature,  there exists essentially only one metric  of constant scalar curvature (up to scalar dilations) in the conformal class. Hence, throughout this section and in the rest of the paper we will  address the positive scalar curvature  setting.

\subsection{Subcritical Yamabe equation and the spectrum of $L_g$ on the $f-$conformal class.}
Let $(M,g)$ be a closed Riemannian manifold of dimension $n\geq 3$ and let $(M,g,f)$ be an isoparametric system. For $2\leq s \leq p_n$, let us consider the functional given by
$$v\longrightarrow J^s(v):=\frac{\int_ML_g(v)vdv_g}{\|v\|_s^2}. $$ Let $$\alpha_s^f(M,g)=\inf_{A-\{0\}}J^s(v)$$ where $A:=H^2_1(M)\cap S_f$.

\begin{Lemma}\label{Subcritical}  Let $2\leq s<p_n$. There exists a positive  smooth solution $u$ that belongs to $S_f$ of the subcritical Yamabe equation
 \begin{equation}\label{subcriticaleq}
              L_g(u)=\alpha_s^f(M,g)u^{s-1}.
             \end{equation}
\end{Lemma}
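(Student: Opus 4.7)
The plan is to obtain $u$ via the direct method of the calculus of variations, minimizing $J^s$ over the restricted set $A$, and then to upgrade the restricted Euler--Lagrange equation to a weak equation on all of $M$ using the fact that $(M,g,f)$ is an isoparametric system.

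First, I would show that $\alpha_s^f(M,g)$ is finite. Since $s_g$ is bounded, the quadratic form $v\mapsto \int_M L_g(v)v\,dv_g$ is bounded below on $H^2_1(M)$ by a constant times $\|v\|_2^2$, and the Sobolev embedding $H^2_1(M)\hookrightarrow L^s(M)$ (with $2\le s<p_n$) gives a two-sided bound. Take a minimizing sequence $\{v_k\}\subset A$ normalized by $\|v_k\|_s=1$. Standard manipulation shows $\{v_k\}$ is bounded in $H^2_1(M)$, so we can extract a subsequence with $v_k\rightharpoonup u$ weakly in $H^2_1$. Because $S_f$ is defined by linear constraints (namely, $v$ is constant along each level set), it is a closed linear subspace, hence weakly closed, so $u\in A$. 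By Rellich--Kondrachov (compactness holds strictly for $s<p_n$), $v_k\to u$ strongly in $L^s(M)$ so $\|u\|_s=1$; combined with weak lower semicontinuity of the $H^2_1$-norm and strong convergence of the lower-order term $\int_M s_g v_k^2\,dv_g$, we conclude $J^s(u)=\alpha_s^f(M,g)$.

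Next, the Lagrange multiplier rule applied to the constrained minimization on the Hilbert subspace $A$ gives
\begin{equation*}
\int_M L_g(u)\varphi\,dv_g=\alpha_s^f(M,g)\int_M u^{s-1}\varphi\,dv_g \qquad \forall\,\varphi\in A.
\end{equation*}
The critical step is to extend this identity to arbitrary $\varphi\in H^2_1(M)$. This is where the isoparametric system hypothesis enters: since $s_g\in S_f$, Proposition \ref{propinvLaplacian} yields $L_g(S_f)\subseteq S_f$, so $L_g(u)\in S_f$, and obviously $u^{s-1}\in S_f$. Given any $\varphi\in H^2_1(M)$, I would apply the coarea formula to write the integrals in the form $\int_{t_{\min}}^{t_{\max}} \psi(t) \bigl(\int_{M_t}\varphi\,|\nabla f|^{-1}d\sigma_t\bigr)dt$, where $\psi$ equals $L_g(u)$ or $u^{s-1}$ (both in $S_f$); the inner integral is the unnormalized fibrewise average of $\varphi$, which belongs to $S_f$ after dividing by $\mathrm{Vol}_f(t):=\int_{M_t}|\nabla f|^{-1}d\sigma_t$. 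Thus both sides of the identity depend on $\varphi$ only through its $S_f$-average $\bar\varphi\in A$, and since the identity already holds for $\bar\varphi$, it holds for $\varphi$. Hence $u$ is a weak solution of Equation \eqref{subcriticaleq} on all of $M$.

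Finally, smoothness of $u$ follows from standard elliptic bootstrapping (the right-hand side is in $L^q$ for all $q$ by Sobolev, then iterating). For positivity, I would replace $u$ by $|u|$: it still lies in $A$ (the modulus of a function constant on each $M_t$ is again constant on each $M_t$), has the same $L^s$-norm, and $J^s(|u|)\le J^s(u)$ by the Kato-type inequality, so $|u|$ is also a minimizer and a weak solution. The strong maximum principle applied to $L_g(|u|)=\alpha_s^f(M,g)|u|^{s-1}$ (in the form $a_n\Delta_g |u|=(\alpha_s^f|u|^{s-2}-s_g)|u|$) then forces $|u|>0$ everywhere, giving the desired positive solution. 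I expect the subtlest point to be the extension of the restricted Euler--Lagrange equation to a weak equation on $M$; the averaging argument via coarea, justified by the isoparametric system condition, is the clean way to avoid invoking the principle of symmetric criticality in a setting where there need not be an underlying group action.
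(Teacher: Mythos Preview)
Your proof is correct and follows essentially the same route as the paper: direct method on the constrained set $A=H^2_1(M)\cap S_f$, Rellich--Kondrakov for $s<p_n$, then use of the isoparametric-system hypothesis to pass from the restricted Euler--Lagrange identity to the full weak equation, followed by elliptic regularity and the strong maximum principle.

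The only notable difference is in the extension step. You argue via the coarea formula that both sides of the weak identity depend on a general test function $\varphi$ only through its fibrewise average $\bar\varphi\in S_f$. The paper does this more directly: since $(M,g,f)$ is an isoparametric system, $w:=L_g(v)-\alpha_s^f(M,g)v^{s-1}$ itself lies in $S_f$; the restricted Euler--Lagrange identity says $\int_M w\psi\,dv_g=0$ for all $\psi\in A$, and taking $\psi=w$ forces $w=0$ pointwise. This self-testing shortcut avoids the coarea machinery entirely. A second cosmetic difference is that the paper takes the minimizing sequence to be non-negative from the outset, whereas you pass to $|u|$ at the end; both are standard and equivalent.
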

 The proof of Lemma \ref{Subcritical} is essentially the same to the one used to prove   existence of solutions of the subcritical Yamabe equation, see for instance (Proposition 2.2,  \cite{A-F-P}) and (Lemma 5.1, \cite{Henry-Petean}).
 \begin{proof} Let  $\{v_i\}$ be  a non-negative minimizing sequence of $J^s$ restricted to $A$. We can assume that each $v_i$ is a smooth function and $\|v_i\|_{s}=1$. It is easy to see that $v_i$ is a bounded sequence in $H^2_1(M)$. Hence,  there exists $v\in H^2_1(M)$ such that (up to a subsequence) $v_i$ converges weakly to $v$ in $H^2_1(M)$. By the Rellich-Kondrakov theorem ($s<p_n$) we have that the inclusion map $i:H^2_1(M)\longrightarrow L^{s}(M)$  is a compact operator. So, again up to a subsequence, $\{v_i\}$ converges strongly in $L^{s}(M)$ to $v$, which implies that $\{v_i\}$ converges to $v$ a.e.  and $\|v\|_s=1$.  Since $S_f$ is a close subspace of $H^2_1(M)$, $v$ is a non-negative minimizer of $J^s$. Therefore, we have that $$\int_M\Big(L_g(v)-\alpha_s^f(M,g)v^{s-1}\Big)\psi dv_g=0$$ for any $\psi\in A$.
   Since $L_g(v)-\alpha_s^f(M,g)v^{s-1}\in S_f$,  necessarily $L_g(v)=\alpha_s^f(M,g)v^{s-1}$. Then the lemma follows using standard re\-gu\-larity arguments and the strong maximum principle. 
     \end{proof}

\begin{Remark} The statement of Lemma \ref{Subcritical} is still valid if instead of the scalar curvature we place a smooth non-negative function $\varphi\in S_f$ in Equation \eqref{subcriticaleq}.
 \end{Remark}

Under the same hypothesis of Lemma \ref{Subcritical} we have the following proposition.  
 
\begin{Proposition}\label{fspectrum} There exists an unbounded  subsequence  of the spectrum of $L_g$ such that the associated eigenfunctions belong to $S_f$.   
\end{Proposition}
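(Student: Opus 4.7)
The plan is to exploit that $S_f$ is a closed $L_g$-invariant subspace of $L^2(M)$, a fact established in the discussion preceding Proposition \ref{propinvLaplacian}, together with self-adjointness of $L_g$. Abstractly, $L_g|_{S_f}$ inherits compact resolvent, so the spectral theorem provides an orthonormal basis of $S_f$ made of eigenfunctions of $L_g$ with eigenvalues tending to $+\infty$, which are automatically a subsequence of the spectrum of $L_g$. I would, however, present the argument variationally in the spirit of Lemma \ref{Subcritical}, to make the role of the isoparametric-system hypothesis transparent.

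Set $\lambda_1^f := \alpha_2^f(M,g)$ and let $\varphi_1 \in A$ be the $L^2$-normalized minimizer supplied by Lemma \ref{Subcritical} with $s=2$. Inductively, suppose $L^2$-orthonormal $\varphi_1,\dots,\varphi_{k-1}\in A$ satisfying $L_g\varphi_i=\lambda_i^f\varphi_i$ with $\lambda_1^f\le\dots\le\lambda_{k-1}^f$ have already been constructed, and define
$$\lambda_k^f := \inf\bigl\{\, J^2(v) : v \in A,\ \|v\|_2=1,\ \langle v,\varphi_i\rangle_{L^2}=0\ \text{for } i<k\,\bigr\}.$$
The compactness argument from Lemma \ref{Subcritical} (Rellich-Kondrakov at $s=2<p_n$, plus closedness of $S_f$ and of the orthogonality constraints under weak $L^2$-convergence) produces a minimizer $\varphi_k\in A$ with $\|\varphi_k\|_2=1$. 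Lagrange multipliers then yield constants $\mu,\mu_1,\dots,\mu_{k-1}$ with
$$\int_M\Bigl(L_g\varphi_k - \mu\varphi_k - \sum_{i=1}^{k-1}\mu_i\varphi_i\Bigr)\psi\,dv_g = 0\quad\text{for every } \psi\in A.$$
Since $L_g$ preserves $S_f$ and each $\varphi_i\in S_f$, the bracketed expression lies in $S_f$; by density of $A$ in $S_f$ inside $L^2(M)$ it must vanish identically. Testing against $\varphi_j$ for $j<k$ and using self-adjointness of $L_g$ then forces $\mu_j=0$, so $L_g\varphi_k=\lambda_k^f\varphi_k$.

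To conclude, I would show $\lambda_k^f\to+\infty$ by contradiction: a uniform bound $\lambda_k^f\le C$ combined with $\|\varphi_k\|_2=1$ and standard $H^2_1$-estimates for the elliptic operator $L_g$ produces a uniformly $H^2_1$-bounded sequence, and Rellich-Kondrakov extracts an $L^2$-convergent subsequence, contradicting $L^2$-orthonormality. The main obstacle I anticipate is the Lagrange-multiplier step: \emph{a priori} the variational equation only holds modulo elements of $L^2$ orthogonal to $A$, and it is precisely the invariance $L_g(S_f)\subseteq S_f$, equivalent to $s_g\in S_f$ by Proposition \ref{propinvLaplacian}, that lets one upgrade it to a genuine eigenvalue equation.
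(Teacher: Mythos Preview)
Your proposal is correct and follows essentially the same approach as the paper: both construct the eigenfunctions variationally by minimizing $J^2$ over $S_f$ intersected with successive orthogonal complements, using the argument of Lemma~\ref{Subcritical} at $s=2$ and the invariance $L_g(S_f)\subseteq S_f$ to upgrade the Euler--Lagrange equation to a genuine eigenvalue equation. Your version is more explicit on two points the paper leaves implicit---the Lagrange-multiplier elimination step and the unboundedness of $\lambda_k^f$ via Rellich--Kondrakov---but the underlying strategy is identical.
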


 \begin{proof} Applying Lemma \ref{Subcritical} for $s=2$ we obtain a smooth positive function $w_1\in S_f$ that satisfies the equation $L_g(w_1)=\lambda^f_1(M,g)w_1$ where $\lambda^f_1(M,g)=\alpha_2^f(M,g)$. Therefore, $w_1$ is an eigenfunction associated to the eigenvalue   $\lambda^f_1(M,g)$. Note that $\lambda^f_1(M,g)$ is the smallest eigenvalue of $L_g$ that has an associated eigenfunction in $S_f$. Let $A_2$ be the closed subspace of $H^2_1(M)$ defined by $A_2:=A_1\cap \{u: \int_Mu w_1dv_g=0\}$.  We define $\lambda_2^f(g):=\inf_{w\in A_2-\{0\}}J^2(w)$. Using the same argument as the one used in the proof of Lemma \ref{Subcritical} we obtain $w_2\in S_f$ that achieves   $\lambda_2^f(g)$ and satisfies $L_g(w_2)= \lambda_2^f(g)w_2$. Using the strong maxi\-mum principle it can be shown that $\lambda_1^f(g)<\lambda_2^f(g)$. Then we define $A_3:=A_2\cap \{u: \int_Mu w_2dv_g=0\}$ and we repeat the procedure to obtain $\lambda_3^f(g)$ and $w_3$, and so on.      
   \end{proof}

 We will note the sequence obtained in the proof of Proposition \ref{fspectrum} by
\begin{equation}\label{f-spectrum}
 \lambda_1^f(g)<\lambda_2^f(g)\leq \lambda_3^f(g)\leq \dots \leq \lambda_k^f(g)\nearrow +\infty\subseteq{Spec(L_g)},
 \end{equation}
\noindent
where each eigenvalue appears repeated according to its multiplicity.   
Actually  this sequence is the spectrum of $L_g$ restricted to the subspace $S_f$. Note that since we are considering metrics that are conformally equivalent to a positive scalar curvature metric, the first eigenvalue of the conformal Laplacian is positive, hence  $\lambda_1^f(g)>0$.

 An immediate consequence of Proposition \ref{fspectrum} is the following coro\-llary.

 \begin{Corollary} Let $(M,g,f)$ be an isoparametric  system.  Then there exist infinitely many eigenfunctions of $L_g$ whose nodal sets are a union of level sets of $f$.
  \end{Corollary}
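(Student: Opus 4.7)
The plan is to invoke Proposition \ref{fspectrum} directly and then simply unpack what it means for an eigenfunction to lie in $S_f$. By that proposition we already have an unbounded sequence of eigenvalues $\lambda_k^f(g)$ of $L_g$ with associated eigenfunctions $w_k \in S_f$. Since there are infinitely many of them, the corollary reduces to a purely set-theoretic observation about the nodal set of a function that is constant along the level sets of $f$.

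The key observation is the following: if $w \in S_f$, then by definition $w$ is constant on each level set $M_t = f^{-1}(t)$, so there is a well-defined function $\psi : [t_{\min}, t_{\max}] \to \mathbb{R}$ with $w = \psi \circ f$ (just set $\psi(t) = w(x)$ for any $x \in M_t$). Consequently the nodal set of $w$ is
\[
\{x \in M : w(x) = 0\} \;=\; f^{-1}\bigl(\psi^{-1}(0)\bigr) \;=\; \bigcup_{t \in \psi^{-1}(0)} M_t,
\]
which is precisely a union of level sets of $f$. Applying this to each $w_k$ given by Proposition \ref{fspectrum} yields infinitely many eigenfunctions of $L_g$ whose nodal sets have the desired form.

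There is essentially no obstacle here: the corollary is an immediate consequence of Proposition \ref{fspectrum} once one notices that belonging to $S_f$ forces the zero set to be saturated with respect to the fibers of $f$. The only mild subtlety worth a sentence is that the ``function'' $\psi$ obtained from $w$ need not be smooth on the focal values, but this is irrelevant for describing the nodal set as a union of level sets. (Note that for $k=1$ the nodal set is empty, since $w_1>0$ by the strong maximum principle argument used in the proof of Proposition \ref{fspectrum}; the interesting content of the corollary is for $k \geq 2$, where $L^2$-orthogonality to $w_1$ forces sign changes and hence a nontrivial union of level sets.)
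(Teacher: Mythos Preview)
Your proof is correct and matches the paper's approach exactly: the paper simply labels the corollary ``an immediate consequence of Proposition \ref{fspectrum}'' without writing out any further argument, and what you have done is precisely to spell out that immediacy---namely, that membership in $S_f$ forces the zero set to be a union of fibers of $f$. Your additional remarks about the smoothness of $\psi$ and the case $k=1$ are accurate side comments but not needed for the bare statement.
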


\begin{Remark} Let $(M,g,f)$ be an isoparametric system where $(M,g)$ is  a co\-nnected closed Riemannian manifold of non-negative scalar curvature and $f$ is a proper isoparametric function. For any metric $h\in [g]_f$, the eigenfunction $u_2^h$  associated to the eigenvalue $\lambda_2^f(h)$ changes sign. Actually,  the connectedness of $M$ implies that the nodal set of  $u_2^h$ is non-empty and the number of connected components of $M-\{(u_2^h)^{-1}(0)\}$ is two.  By the comments  made in the Introduction, we know that all the isoparametric hypersurfaces induced by $f$ are connected and separates $M$ into two disc bundles over the focal submanifolds.  Therefore, the nodal set of $u_2^h$ consist in one of the isoparametric hypersurfaces induced by $f$ and it eventually include one of the components of the focal  submanifold $\{M_{-}\cup M_{+}\}$.  However, using the strong maximum principle we obtain that neither $M_+$ nor $M_-$ could belong to the zero set of $u_2^h$ \emph{(}see the proof of Corollary \ref{zeroset}\emph{)}.  
Therefore, in this situation we conclude that there exist infinitely many Riemannian metrics on $M$ such  that they have an eigenfunction of the conformal Laplacian which its nodal set is an isoparametric hypersurface induced by $f$. 
 \end{Remark}

  \subsection{Isoparametric $k^{th}$ $f-$Yamabe constant.}\label{kIsoYamabeconstant}
  Now, we are in condition to introduced the  $isopametric\ k^{th}\ f-Yamabe\ constant$ of an isoparametric system $(M,g,f)$. It is an invariant of the $f-$conformal class that we are going to use  in Section \ref{ProofThm} to prove Theorem \ref{NodalYamabe}. The isoparametric $k^{th}-$ Yamabe constant of $(M,g,f)$ is given by 
  
  $$Y^k_f(M,[g]_f):=\inf_{h\in [g]_f}\lambda_k^f(h)vol(M,h)^{\frac{2}{n}}.$$
  
  Let $u\in L^{p_n}_{\geq 0}(M)-\{0\}\cap S_f$ and consider the generalized metric $g_u=u^{p_n-2}g$.
    We define  the $k-$Grassmanian space of $H^2_1(M)\cap S_f$ with respect to $u$, namely $Gr^k_u(H^2_{1}(M)\cap S_f)$, as the space of $k-$dimensional subspaces of $H^2_{1}(M)\cap S_f$ which are  $k$-dimensional subspaces in $H^2_{1}(M-\{u^{-1}(0)\})\cap S_f$ as well. 
    
    For $v\in  Gr^1_u(H^2_{1}(M)\cap S_f)-\{0\}$ let us consider 
     $$F(u,v):=\frac{\int_M a_{n}|\nabla v|^2_g+s_gv^2dv_{g}}{\int_M u^{p_n-2}v^2dv_{g}}.$$
    \noindent    
    Then, using the variational characterization of the eigenvalues (see for instance Proposition 2.1 in \cite{Ammann-Humbert}) it is not  difficult to see that 
  $$Y^k_f(M,[g]_f)=\inf_{\substack{ u\in L^{p_n}_{\geq 0}(M)\cap S_f -\{0\}\\ V\in Gr^k_u(H^2_{1}(M)\cap S_f)}}  \sup_{v\in V-\{0\}}F(u,v)\big(\int_Mu^{p_n}dv_g\big)^{\frac{2}{n}},$$ 
  \noindent
  where  
     $$\lambda_k^f(g_u):=\inf_{V\in Gr^k_u(H^2_{1}(M)\cap S_f)}  \sup_{v\in V-\{0\}}F(u,v)$$
is the so called   $k^{th}-$generalized eigenvalue of $L_{g_u}$ restricted to $S_f$.

 \subsection{Constant scalar curvature metrics.}
 
  Let $(M^m,g)$ and $(N^n,h)$ be closed Riemannian manifolds.  Assume that $(N,h)$ has constant scalar curvature. If $f$ is an isoparametric function on $(M,g)$ then so is  $f$ on  $(M\times N, g+h)$. Morever,  if $(M,g,f)$ is an isoparametric system then $(M\times N, g+h, f)$ is an isoparametric system as well.

\begin{Proposition}\label{constantscalarsubcritical} Let $(M^m,g)$ be a closed Riemannian manifold  and let  $(N^n,h)$ be any closed Riemannian manifold of constant scalar curvature. Assume that $m+n\geq 3$ and $s_g+s_h>0$. Let $(M,g,f)$ be an isoparametric system. Then, there exists a Riemannian metric in  $[g+h]_f$ with constant scalar curvature.  
\end{Proposition}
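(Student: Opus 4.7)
The plan is to reduce the critical Yamabe equation on $M\times N$ within the $f$-conformal class to a \emph{subcritical} equation on $M$, and then solve it by a minimization argument essentially identical to the one in Lemma \ref{Subcritical}. The key observation is that any $u\in S_f$, viewed on $M\times N$ as a function constant in the $N$-direction, satisfies $\Delta_{g+h}u=\Delta_g u$, while $s_{g+h}=s_g+s_h$. Combined with \eqref{scalarcurvatureeq}, the condition that the metric $u^{p_{m+n}-2}(g+h)\in[g+h]_f$ have constant scalar curvature $c$ thus amounts to solving
\begin{equation*}
a_{m+n}\Delta_g u+(s_g+s_h)u=c\, u^{p_{m+n}-1}
\end{equation*}
on $M$ for a positive $u\in S_f$. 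For $n\geq 1$ one has $p_{m+n}<p_m$, so this equation is subcritical relative to the dimension of $M$ and the Rellich--Kondrakov theorem applies. (The case $n=0$ would fall under the critical theory treated in Section \ref{critical equation}.)

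I would then mimic the proof of Lemma \ref{Subcritical} almost verbatim, with $L_g$ replaced by $\widetilde L:=a_{m+n}\Delta_g+(s_g+s_h)$ and $s=2$ replaced by $s=p_{m+n}$. Set
\begin{equation*}
\widetilde J(v):=\frac{\int_M\bigl(a_{m+n}|\nabla v|_g^2+(s_g+s_h)v^2\bigr)dv_g}{\bigl(\int_M v^{p_{m+n}}dv_g\bigr)^{2/p_{m+n}}},
\end{equation*}
and minimize $\widetilde J$ over $A:=(H^2_1(M)\cap S_f)\setminus\{0\}$. Pointwise positivity of $s_g+s_h$ gives $\widetilde J\geq 0$. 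A non-negative minimizing sequence normalized by $\|v_i\|_{p_{m+n}}=1$ is bounded in $H^2_1(M)$; since $p_{m+n}<p_m$, Rellich--Kondrakov furnishes a subsequence converging strongly in $L^{p_{m+n}}(M)$ and weakly in $H^2_1(M)$. The subspace $H^2_1(M)\cap S_f$, being defined by the linear constraints of constancy along level sets of $f$, is weakly closed, so the weak limit $u$ lies in $A$ and attains $\inf\widetilde J$ by lower semicontinuity together with the strong $L^{p_{m+n}}$-convergence.

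The really delicate point — and the step where the isoparametric-system hypothesis is crucial — is to upgrade the Euler--Lagrange identity
\begin{equation*}
\int_M\bigl(\widetilde L u-c\,u^{p_{m+n}-1}\bigr)\psi\, dv_g=0\qquad\text{for all }\psi\in A,
\end{equation*}
valid only against $S_f$-directions, to pointwise vanishing of the bracketed integrand. This is handled exactly as in Lemma \ref{Subcritical}: since $s_g+s_h\in S_f$, the same direct computation preceding Proposition \ref{propinvLaplacian} shows that $\widetilde L$ maps $S_f$ into itself, and hence $\widetilde L u-c\,u^{p_{m+n}-1}$ itself belongs to $S_f$; being $L^2$-orthogonal to a dense subset of its own ambient subspace, it vanishes identically. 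Standard elliptic regularity then promotes $u$ to a smooth function, and the strong maximum principle (using $s_g+s_h>0$ and $u\geq 0$, $u\not\equiv 0$) gives $u>0$. The metric $u^{p_{m+n}-2}(g+h)\in[g+h]_f$ then has constant scalar curvature equal to the Lagrange multiplier $c$.
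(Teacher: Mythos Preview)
Your proof is correct and follows essentially the same approach as the paper: reduce the Yamabe equation of $(M\times N,g+h)$ restricted to $S_f$ to a subcritical equation on $M$ (using $p_{m+n}<p_m$), minimize the corresponding functional over $H^2_1(M)\cap S_f$, and then invoke the argument of Lemma \ref{Subcritical} together with the $S_f$-invariance of the operator to pass from the restricted Euler--Lagrange identity to the pointwise equation. Your write-up is in fact a bit more explicit than the paper's on the last step.
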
 

\begin{proof}  Without loss of generality  we can assume that $vol(N,h)=1$. Let us  consider the Yamabe functional  of the Riemannian product $(M\times N,g+h)$ restricted to $A=H^2_1(M\times N)\cap S_f=H^2_1(M)\cap S_f$.  That is, if $w\in A$ we have  
$$J(w)=\frac{\int_{M}a_{m+n}|\nabla_g w|^2+s_{g+h}w^2dv_g}{\|w\|_{p_{m+n}}^2},$$
\noindent
where the  $L^{p_{m+n}}$ norm  is considered with respect to the metric $g$. Note that $p_{m+n}<p_{m}$, so we are in the subcritical setting as in Lemma \ref{Subcritical}. Hence, we take  any smooth non-negative minimizing sequence  $\{u_i\}$ such that $\|u_i\|_{p_{m+n}}=1$ and we proceed as in the proof of Lemma \ref{Subcritical}. We obtain a  smooth positive function $u$ that belongs to $S_f$, that depends only on $M$ and satisfies the equation $$a_{m+n}\Delta_gu+s_{g+h}u=\Big(\inf_{w\in A-\{0\}}Y(w)\Big)u^{p_{m+n}-1}.$$  
Therefore, by Equation \eqref{scalarcurvatureeq} we have that the Riemannian metric $u^{p_{m+n}-2}$ $(g+h)$ belongs to $[g]_f$ and has constant scalar curvature.
\end{proof}

When a conformal class admits a metric of positive scalar curvature, then it might have within several non-isometric metrics of constant scalar curvature. Therefore, in the positive scalar curvature setting a $f-$conformal class could has many metrics of constant scalar curvature as the following examples show.        
   
 \begin{Example}\label{csc}  Let $(M,g)$ and $(N,h)$ be both closed Riemannian ma\-nifolds of constant scalar curvature and $f$ an isoparame\-tric function on $(M,g)$.
 Applying \emph{(}Theorem 1.4, \cite{Henry-Petean}\emph{)} we obtain that if  $s_{g+h}>\frac{\lambda_2^f(g)}{p_{m+n}-1}$, then there exists a non constant function $u\in S_f$ such that the Riemannian metric $g_u=u^{p_{m+n}-2}(g+h)\in [g+h]_f$ has constant scalar curvature.      
 \end{Example}

 \begin{Example} Let us consider the Riemannian product of spheres  $(S^n\times S^m, g^n_0+tg^m_0)$ and let $f$ be an isoparametric function on $(S^n,g^n_0)$. Let $k$ be any positive integer. Then, there exists $t_0$ such that  there are at least $k$ non-isometrically unit volume metrics of constant scalar curvature on $[g^n_0+tg^m_0]_f$ if $t\leq t_0$. We refer the reader to \emph{(}Corollary 1.3, \cite{Henry-Petean}\emph{)} for a precise statement on the lower bound of the number of non-isometrically unit volume metrics with constant scalar curvature in the $f-$conformal class. Recall that for even dimensional spheres $S^{2s}$ there exist isoparametric hypersurfaces with one principal curvature of multiplicity $2s-1$ (hyperspheres), with two distinct principal curvatures (embeddings of $S^k\times S^{s-k-1}$), and for $S^4$ there  exist isoparametric hypersurfaces with three distinct principal curvatures as well.
 	For instance,  if $n=2s$ with $s\geq 1$, let $l(t)=\big(m(m-1)t^{-1}+2s(2s-1)\big)/(2s+m-1)$. Assume that $l(t)$  belongs to the interval given by $\big(A_{i}, A_{i+1}\big]$ where  $A_i=i(2s+i-1)$. Then,  we have that the number of essentially different constant scalar curvature metrics in $[g^{2s}_0+tg^m_0]_f$ is at least $i+[(2s-1)/2][i/2]$ if $s>2$ and $i+[i/2]+[i/3]$ if $s=2$.

 \end{Example}

\section{Proof  of Theorem \ref{NodalYamabe}.}\label{ProofThm}

We are going to prove that the  isoparametric second $f-$Yamabe constant $Y^2_f(M\times N,[g+h]_f)$ is attained  by a generalized metric  $g_u=u^{p_{m+n}-2}(g+h)$. Actually, $u$ will be the absolute value of a nodal solution of the Yamabe equation. The proof of Theorem \ref{NodalYamabe} is essentially the proof of (Theorem 1.1, \cite{Petean2}) which states that the $M-${\it second Yamabe constant} of $(M\times N, g+h)$, which is a subcritical version of the second Yamabe constant, is always attained. The achievement of the $M-$second Yamabe constant leads to the existence of a nodal solutions of the Yamabe equation that depend only on $M$.

\begin{proof}[Proof of Theorem \ref{NodalYamabe}.]

Without loss of generality we can assume that $vol(N,h)=1$.

For any  $u\in L^{p_{m+n}}(M)\cap S_f-\{0\}$, there exist $v_1\geq 0$ and $v_2$ that belong to $H^2_{1}(M)\cap S_f$ such that 

\begin{equation}\label{orth}
\int_Mu^{p_{m+n}-2}v_iv_jdv_g=\delta_{ij},
\end{equation} 
\noindent
and at the same time they satisfy in a weak sense the following equations 
\begin{gather}
a_{m+n}\Delta_g v_1+s_{g+h}v_1=\lambda_{1}^f(g_u)u^{p_{m+n}-2}v_1,\label{v_1}
\\
a_{m+n}\Delta_g v_2+s_{g+h}v_2=\lambda_{2}^f(g_u)u^{p_{m+n}-2}v_2.\label{v_2}
\end{gather} 

For the details we refer to the reader to  (Proposition 3.2 in  \cite{Ammann-Humbert}) and to Section 4 in \cite{Henry-Madani}. The subspace $V_0:=span(v_1,v_2)$ realizes the gene\-ralized second eigenvalue $\lambda_2^f(g_u)$. More precisely, we have 
$$\lambda_2^f(g_u)=\sup_{w\in V_0-\{0\}}F(u,w)=F(u,v_2).$$

Using similar arguments as the ones used in the proof of (Theorem 3.4, \cite{Ammann-Humbert}) and (Theorem 1.5,  \cite{Henry-Madani}) it can be seen  that if  $g_u$ realizes the second isoparametric $f-$constant of $(M\times N,g+h)$ then  $u=|v_2|$ and $v_2$ is a nodal solution of the Yamabe equation. Also, by the connectedness of $M$, it can be seen that the number of  connected components of $M\times N-\{v_2^{-1}(0)\}$ is two.

In our setting there always exists a minimizer of $Y_f^2(M\times N,[g+h]_f)$ thanks to $p_{m+n}$ is less than the critical Sobolev exponent $p_m$. Indeed, let $\{u_i\}$ be a minimizing sequence of positive smooth functions that belong to $S_f$  and satisfy $\|u_i\|_{p_{m+n}}=1$.

For $u_i$, let $v_1^i$ and  $v_2^i$ be the functions that satisfy (\ref{orth}), (\ref{v_1}) and (\ref{v_2}).  Then, $$\lambda^f_2(g_{u_i})=F(u_i,v_2^i),$$ 
$$\lambda^f_2(g_{u_i})\longrightarrow_{i\to +\infty} Y^2_f(M\times N,[g+h]_f)$$
and 
\begin{equation}\label{limitelambda1}
\limsup_{i\to +\infty} \lambda_1^f(g_{u_i})\leq Y_f^2(M\times N,[g+h]_f).
\end{equation}
The sequences $\{u_i\}$, $\{v^i_1\}$ and $\{v^i_2\}$ are bounded in $L^{p_{m+n}}(M)$ and $H^2_1(M)$, respectively. Therefore, up to a subsequence, there exist $u$, $v_1$ and $v_2$ such that $u_i$ converges weakly to $u$ in $L^{p_{m+n}}(M)$, and $v_i^j$ converges weakly to $v_j$  in $H^2_1(M)$ (for $j=1,2$). As a consequence of the weak convergence we have that 
\begin{equation}\label{liminfnorm}
\|u\|_{p_{m+n}}\leq \liminf_{i\to +\infty} \|u_i\|_{p_{m+n}}=1.
\end{equation}
By the Rellich-Kondrakov theorem  we have that $\{v^i_j\}$ converges strongly to $v_j$ in $L^{p_{m+n}}(M)$  (again up to a subsequence) and $v_j\in S_f$ (for $j=1,2$). This implies that the following equations hold in a weak sense 

$$a_{m+n}\Delta_g v_1+s_{g+h}v_1=\Big(\limsup_{i\to \infty} \lambda_1^f(g_{u_i})\Big)u^{p_{m+n}-2}v_1,$$
and
$$a_{m+n}\Delta_g v_2+s_{g+h}v_2=Y_f^2(M\times N,[g+h]_f)u^{p_{m+n}-2}v_2.$$

Let $W_0:=span (v_1,v_2)\in Gr^2_u(H^2_{1}(M)\cap S_f)$. By definition of the isoparametric second $f-$Yamabe constant we have 
$$Y_f^2(M\times N,[g+h]_f)\leq sup_{v\in W_0-\{0\}}F(u,v)\big(\int_Mu^{p_{m+n}}dv_g\big)^{\frac{2}{m+n}}.$$
Then  by inequalities \eqref{limitelambda1} and \eqref{liminfnorm} we obtain  
$$sup_{v\in W_0-\{0\}}F(u,v)\big(\int_Mu^{p_{m+n}}dv_g\big)^{\frac{2}{m+n}}\leq Y_f^2(M\times N,[g+h]_f).$$
Hence, $g_u$ realizes the isoparametric second $f-$Yamabe constant and the theo\-rem follows.
\end{proof}

\begin{proof}[Proof of Corollary \ref{zeroset}] Since $f$ is a proper isoparametric function all the level sets are connected submanifolds of $M$. If $S$ is an isoparametric hypersurfaces induced by $f$, $M-S$ is a disjoint union of two  connected open sets  $\Omega_-$ and $\Omega_+$,  where $M_-\subset \Omega_-$,  $M_+\subset \Omega_+$ and $\partial (\overline{\Omega_-})=\partial (\overline{\Omega_+})=S$. Actually, $\Omega_{+}$ ($\Omega_-$) is a disk bundle over the focal submanifold $M_{+}$ ($M_-$). Let $w\in S_f$ be the nodal solution of the Yamabe equation  provided by Theorem \ref{NodalYamabe}.  The nodal set $Z$ of $w$  is a union of finite level sets of $f$. But  $M-Z$ has only two connected components. Then, it is a subset of $S\cup M_-\cup M_+$ where $S$ is an isoparametric hypersurface induced by $f$.  The set where $w$ is positive $\{w>0\}$ is included either in $\Omega_{+}$ or $\Omega_-$. Let us assume that $\{w>0\}\subseteq \Omega_+$ (we will arrive to the same conclusion if we assume that $\{w>0\}\subseteq \Omega_-$) and  is a proper subset. Then $M_+\subset Z$. However, using the strong maximum principle (recall that the scalar curvature is positive), we conclude that $w$ can not be entirely  zero in $M_+$ unless $w$ is the zero function, which is not possible. 
Therefore $\{w>0\}=\Omega_+$. In the same way we see that $M_-$ does not belong to the zero set of $w$. Therefore, $\{w<0\}=\Omega_{-}$. Finally, considering $w$ as a function of $M\times N$ we obtain that its nodal set is $S\times N.$ 
 
\end{proof}

\section{Critical equation}\label{critical equation}

In this section we will prove Theorem \ref{NodalOrbitsgeq1}. This theorem is a  consequence of the compact embedding of $H^2_1(M)\cap S_f$ into $L^{p_n}(M)$ when the dimensions of the level sets of $f$ are  positive. In order to prove the compactness of this embedding we will prove Lemma \ref{fSobolevembedding} below, which says that when each dimension of all connected components of any level set of an isoparametric function $f$ is  positive then the Rellich-Kondrakov theorem can be improved by considering the functions that are constant along the level sets of $f$.
When the isoparametric function is induced by a co-homegeneity one action of a compact subgroup of the isometry group, Lemma \ref{fSobolevembedding} is the Sobolev embedding theorem under the presence of symmetries proved in (Corollary 1, \cite{Hebey-Vaugon3}).  


Given an isoparametric function on $(M,g)$ we denote by $H^q_ {1,f}(M)$ the closed subspace $H^q_1(M)\cap S_f$.

\begin{Lemma}\label{fSobolevembedding} Let $(M,g)$ be a closed Riemannian manifold of dimension $n$ and let $f$ be an isoparametric function on $(M,g)$ such that $k(f)\geq 1$. We have that
\begin{enumerate}
 \item[i)] if $q\geq n-k(f)$, then the inclusion map of $H^q_{1,f}(M)$  into $L^p(M)$ is continuous and compact for any $p\geq 1$.
 \item[ii)] If $q< n-k(f)$, then the inclusion map of $H^q_{1,f}(M)$  into $L^p(M)$ is continuous for any $p$ that satisfies $1\leq p \leq \frac{q(n-k(f))}{n-k(f)-q}$, and compact if $1\leq p < \frac{q(n-k(f))}{n-k(f)-q}$.   
  \end{enumerate}

\end{Lemma}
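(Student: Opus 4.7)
The idea is that any $u \in S_f$ equals $\psi \circ f$ for some $\psi:[t_{\min},t_{\max}] \to \re$. In normal tubular coordinates around a connected component of $M_-$ of dimension $k_-$, this $\psi$ becomes a radial function in effective Euclidean dimension $n-k_-$, and similarly for $M_+$; the standard radial Sobolev--Rellich theory on a ball in $\re^{n-k_\pm}$ then applies, and taking the least favorable dimension over all components of $M_-\cup M_+$ gives the stated exponents in terms of $n-k(f)$.

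First I would pick a smooth partition of unity $\{\eta_-,\eta_0,\eta_+\}$ on $[t_{\min},t_{\max}]$ with $\eta_-$ and $\eta_+$ supported in small neighborhoods of $t_{\min}$ and $t_{\max}$, respectively, and $\eta_0$ supported in a compact subinterval of the regular locus. Pulling back by $f$, this becomes a partition of unity on $M$ subordinate to a cover by tubular neighborhoods $U_-,U_+$ of $M_\pm$ and a middle region $U_0$, all of whose members lie in $S_f$; it therefore suffices to prove the embedding and compactness for functions supported in each of the three regions separately. On $U_0$ the quantity $\|\nabla f\|^2 = b\circ f$ is bounded below, and coarea reduces $\|u\|_p^p$ and $\|\nabla u\|_q^q$ to one-dimensional integrals in $t$ with continuous weights bounded above and below. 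Thus $\psi$ is equivalent to an ordinary $H^1_q$ function on a bounded interval, and the one-dimensional Sobolev--Rellich theory gives continuous and compact inclusions into every $L^p$ with $p < \infty$.

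Next, on the tubular neighborhood $U_-$ of a connected component of $M_-$ of dimension $k_-$, the normal exponential map supplies coordinates $(x,r,\theta)$ with $x\in M_-$, $r$ the distance to $M_-$ and $\theta\in S^{n-k_--1}$, in which the volume density is $J_-(x,r,\theta)\,dx\,dr\,d\theta$ with $J_- = r^{n-1-k_-}(1+O(r))$ as $r\to 0$. Since $f$ depends only on $r$ in these coordinates, so does $u$, say $u = \tilde\psi(r)$, and the gradient satisfies $|\nabla u| = |\tilde\psi'(r)|$. Fubini then reduces
\[
\int_{U_-}|u|^p\,dv_g \quad\text{and}\quad \int_{U_-}|\nabla u|^q\,dv_g
\]
to weighted one-dimensional integrals with weight comparable to $r^{n-1-k_-}$, which identifies $\tilde\psi$ with a radial $H^1_q$ function on a ball in $\re^{n-k_-}$. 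The standard radial Sobolev--Rellich theorem on that ball yields the continuous embedding into $L^p$ for $p \leq q(n-k_-)/(n-k_--q)$ (with compactness for strict inequality) when $q<n-k_-$, and into every $L^p$, $p<\infty$, compactly, when $q \geq n-k_-$. The analogous argument near each component of $M_+$ of dimension $k_+$ produces the same conclusion with $k_+$ in place of $k_-$. Since $k_-,k_+ \geq k(f)$ by definition, the least favorable of these exponents is governed by $n - k(f)$, which is precisely the one in the lemma.

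The main technical obstacle is verifying the local identification of $\tilde\psi$ with a radial Sobolev function: one must show that $\exp^\perp$ is a diffeomorphism from a sufficiently thin normal disk bundle onto $U_-$ with Jacobian having the asymptotic $r^{n-1-k_-}(1+O(r))$ uniformly in $(x,\theta)$. Because $M_\pm$ is a smooth compact submanifold of $M$, this is a classical expansion of the Jacobian of the normal exponential map, and the uniform bounds reduce the global question to finitely many radial Sobolev embeddings on Euclidean balls, whose combination via the partition of unity yields the lemma.
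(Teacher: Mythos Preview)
Your argument is correct, but it is organized differently from the paper's proof. The paper does not split $M$ into tubular neighborhoods of the focal sets and a regular middle region. Instead, it uses a single type of local chart at \emph{every} point: Fermi coordinates along the leaf $M_{f(x)}$ give a product chart $\varphi:W\to U\times V$ with $U\subset\re^{k(x)}$, $V\subset\re^{n-k(x)}$, in which any $u\in S_f$ depends only on the $V$-factor; covering $M$ by finitely many such charts and applying the standard Euclidean Sobolev embedding on each $V_i\subset\re^{n-k_i}$ yields continuity, while a partition of unity (chosen constant along the $U_i$-factors) together with the compact embedding $H^q_{1,0}(V_i)\hookrightarrow L^p(V_i)$ gives compactness.

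Your route---pulling a partition of unity back from $[t_{\min},t_{\max}]$ via $f$, reducing the regular region to one-dimensional Sobolev theory by coarea, and handling the tubes over each focal component by identifying $u$ with a radial function on a ball in $\re^{n-k_\pm}$ via the normal exponential map---is a genuinely different decomposition. It has the advantage of making transparent that only the focal dimensions $k_\pm$ govern the critical exponent (the middle region embeds into every $L^p$), and the $S_f$-partition of unity comes for free from the interval. The paper's approach, in contrast, is more uniform: the same product-chart argument works at regular and singular leaves alike, so no separate treatment of the middle region or radial analysis is needed. Both arguments ultimately rest on the structural theorem for isoparametric foliations (regular level sets are tubes over the smooth focal submanifolds) and on Euclidean Sobolev in the reduced dimension $n-k$.
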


\begin{proof}The  proof of the lemma is divided in three steps. 

{\bf Step 1:}  In this step using the structure of the isoparametric foliations we are going to construct an appropriate coordinate systems that will allow us to decrease the dimension in order to use a subcritical Sobolev embedding instead the critical one. The existence of these coordinates systems are consequence of the structural theorem of isoparametric hypersurfaces mentioned in the Introduction (Theorem A, \cite{Wang}). Recall that it states that any regular level set $M_t$ ($t_{\min}<t<t_{\max}$) is a tube over either of the focal submanifolds. On the other hand, for any $x$ that belongs to a regular level set $M_t$, the integral curve $c_x(t)$ of the vector field $F=\nabla f/\sqrt{b(f)}$ trough $x$ is a unit speed geodesic that intersects orthogonally all the level set of $f$. The length of the segments induced by the integral curves of $F$ realises the distance between to regular level set. We have  $$d_g(M_{f(c_x(t_1))},M_{f(c_x(t_0))})=t_1-t_0.$$    
So, let $x\in M$. Let $k(x)$ be the dimension of the connected component of $M_{f(x)}$ where $x$ belongs to. We denote with $\pi_2$ the projection of $\re^k\times \re^{n-k}$ onto $\re^{n-k}$.  Using Fermi coordinates centered at $x$ on $M_{f(x)}$, it is not difficult to see that there exists a coordinate system $(W,\varphi)$  (shrinking suitably the neighborhood $W$) such that:

\begin{itemize}
\item[1)] $x\in W$ and $\varphi(W)=U\times V$ where, $U\subset \re^{k(x)}$ and $V\subset \re^{n-k(x)}$ are open subsets with smooth boundaries. 
\item[2)] For any $y\in W$, $U\times \pi_2(\varphi(y))\subseteq \varphi( M_{f(y)}\cap W)$. 
\item[3)] There exists a positive constant $a_x$ such that $(a_x)^{-1}dv_{g^n_e}\leq dv_g\leq a_x dv_{g^n_e}$,
where $dv_{g^n_e}$ is the volume element with respect to the Euclidean metric.
\end{itemize}

{\bf Step 2}  In this step we will prove that the inclusion $H^q_{1,f}(M)\subset L^p(M)$ is a continuous map for any $p\geq 1$ if either $q\geq n-k(f)$ or $q< n-k(f)$ and $ p\leq q(n-k(f))/(n-k(f)-q)$. 

Since $M$ is a compact manifold there exists a collection of points $\{x_i\}_{i=1}^m$ such that the charts  $(W_i,\varphi_i)_{i\in [1,m]}$  (where each $x_i\in (W_i,\varphi_i)$ is one of the coordinates system introduced in the first step) cover $M$.   Let $k_i=k(x_i)$. 

Let $u\in C^{\infty}(M)\cap S_f$ and $p\geq 1$. Let us consider $u_i:V_i\longrightarrow \re$ the smooth function defined by $u_i(y):=u(\varphi^{-1}_i(x,y))$ for any $x\in U_i$. The function $u_i$ is well defined by property $2)$.

By the properties of the coordinate systems $(W_i,\varphi_i)$ there exist $A_i, B_i,$ $ C_i$ positive constants such that  $B_i \|u_i\|_{L^s(V_i)}\leq \|u\|_{L^s(W_i)}\leq C_i \|u_i\|_{L^s(V_i)}$ and $D_i\|\nabla u_i\|_{L^s(V_i)}\leq \|\nabla u\|_{L^s(W_i)}$ for any $s\geq 1$.

By the above inequalities and applying the  Sobolev embedding theorem for Euclidean bounded domains to function $u_i$  we obtain that 
\begin{equation}\label{ineqSobolev1}\|u\|_{L^p(W_i)}\leq A_i\Big(\|\nabla u\|_{L^q(W_i)}+\|u\|_{L^q(W_i)} \Big)
\end{equation} 
if either $n-k_i\leq q$ or $n-k_i> q$ and  $p\leq q(n-k_i)/(n-k_i-q)$.

Now if $n-k(f)\leq q$, then $n-k_i\leq q$ for any $1\leq i\leq m$. If $n-k(f)>q$ and $p\leq q(n-k(f))/(n-k(f)-q)$ , then for each $i$ we have that either $n-k_i\leq q$ or $n-k_i> q$ and $p\leq (n-k_i)q/(n-k_i-q)$. Indeed, since $n-k\geq n-k_i$ we have that $(n-k)(n-k_i-q)\leq (n-k_i)(n-k-q)$ which implies that $(n-k)q/(n-k-q)\leq (n-k_i)q/(n-k_i-q)$. Therefore by Inequality (\ref{ineqSobolev1}) we have    
  
$$\|u\|_{p}\leq \sum_{i=1}^m\|u\|_{L^p(W_i)}\leq A\sum_{i=1}^{m}\Big(\|\nabla u\|_{L^q(W_i)}+\| u\|_{L^q(W_i)} \Big)$$  
  $$\leq B\Big( \|\nabla u\|_{L^q(M)}+\| u\|_{L^q(M)} \Big).$$

{\bf Step 3} Let $\{u_j\}$ be a bounded sequence in $H^q_{1,f}(M)$.  Let $\{\phi_i\}_{i=[1,m]}$ be a partition of  unity subordinate to the covering $\{(W_i,\varphi_i)\}_{i\in [1,m]}$ such that $\phi_i(\varphi_i^{-1}(x,y))=\phi_i(\varphi_i^{-1}(x',y))$ for any $1\leq i\leq m$,  $x,x'\in U_i$ and  $y\in V_i$. We define the sequence  $u_{j,i}:=(\phi_i\circ \varphi^{-1}) (u_j)_i$. Notice that  functions $u_{j,i}$ has compact support in $V_i$. For any $1\leq i\leq m$, $\{u_{j,i}\}$ is a bounded sequence in $H^q_1(V_i)$. If either $q\geq n-k(f)$ or $q< n-k(f)$ and  $1\leq p < \frac{q(n-k(f))}{n-k(f)-q}$  then we have  
\begin{equation}\label{Ineqexponentes2}
\frac{1}{p}>\frac{1}{q}-\frac{1}{n-k_i}                                                                                                                                                                                                                                                                                                                                                                                                                                                                                                                                                                                                                                                                                                                                                                                                                                                                                                                                                                                                                                                                                                           \end{equation}
                                                                                                                                                                                                                                                                                                                                                                                                                                                                                                                                                    for any $i\in[1, m]$. On the other hand, we know that for any bounded open set $V\subset \re^l$,   $H^s_{1,0}(V)$ is compactly embedded in $L^{r}(V)$ if $1/r>1/s-1/l$ (see for instance Lemma 2.5, \cite{Hebey}). Therefore, by Inequality \eqref{Ineqexponentes2}  there is a subsequence of  $\{u_j\}$ (that in order to simplify the notation we denote  with $\{u_j\}$ as well) such that ${(u_{j})}_{i}$ is a Cauchy sequence in $L^p(V_i)$ for any $1\leq i\leq m$.   
Joining  with property $3)$  satisfied by the coordinate systems $(W_i,\varphi_i)$  we see  that the sequence $\phi_iu_j$ is a Cauchy sequence in $L^p(M)$. Since $\{\phi_i\}_{i=[1,m]}$ is a partition of  unity we obtain that $\{u_j\}$   converges strongly in $L^p(M)$.

\end{proof}

\begin{Theorem}\label{eqcritical} Let $(M,g,f)$ be an isoparametric system such that $k(f)\geq 1$. Let $s\geq 1$.   If $k(f)=n-1, n-2$ or  $s< 2(n-k(f))/(n-k(f)-2)$, then for some constant $c$ there exists a positive solution $u$ of the equation
 $$L_g(u)=cu^{s-1}$$
 that belongs to $S_f$. 
 \end{Theorem}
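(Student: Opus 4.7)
The plan is to adapt the variational argument of Lemma \ref{Subcritical} to the critical (or near-critical) exponent $s$, using Lemma \ref{fSobolevembedding} in place of the standard Rellich--Kondrakov theorem. The hypothesis on $s$ and $k(f)$ is chosen precisely so that the inclusion $H^2_{1,f}(M)\hookrightarrow L^s(M)$ is compact: case (i) of Lemma \ref{fSobolevembedding} with $q=2$ handles $k(f)\in\{n-1,n-2\}$ (the effective dimension drops to $1$ or $2$, making $L^s$ subcritical for every $s\ge 1$), while case (ii) with $q=2$ gives compactness exactly in the range $s<2(n-k(f))/(n-k(f)-2)$.

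I would consider the functional
$$J^s(v)=\frac{\int_M L_g(v)\,v\,dv_g}{\|v\|_s^2}$$
on $A=H^2_1(M)\cap S_f$, and set $c:=\inf_{A\setminus\{0\}}J^s$. Boundedness of $J^s$ from below follows from the identity $\int_M L_g(v)v\,dv_g=a_n\|\nabla v\|_2^2+\int_M s_g v^2\,dv_g$ together with the $L^\infty$ bound on $s_g$ and Sobolev interpolation. Taking a non-negative minimizing sequence $\{v_i\}\subset A$ with $\|v_i\|_s=1$, one shows in the usual way that $\{v_i\}$ is uniformly bounded in $H^2_1(M)$.

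Passing to a subsequence, $v_i\rightharpoonup v$ weakly in $H^2_1(M)$; since $S_f$ is weakly closed, $v\in A$. By the compactness in Lemma \ref{fSobolevembedding}, $v_i\to v$ strongly in $L^s(M)$, so $\|v\|_s=1$ and $v\geq 0$. Weak lower semicontinuity of the gradient norm then gives $J^s(v)\leq c$, so $v$ is a minimizer, and the Euler--Lagrange equation reads
$$\int_M\bigl(L_g v-c\,v^{s-1}\bigr)\psi\,dv_g=0\qquad\text{for every }\psi\in A.$$
Since $(M,g,f)$ is an isoparametric system, Proposition \ref{propinvLaplacian} ensures that $S_f$ is an invariant subspace of $L_g$, so $L_g v-c\,v^{s-1}\in S_f$; testing the displayed identity against this function itself (exactly as in the proof of Lemma \ref{Subcritical}) forces $L_g v=c\,v^{s-1}$ pointwise. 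Standard elliptic regularity upgrades $v$ to a smooth function, and the strong maximum principle, applied after rewriting the equation as $(L_g+K)v=(c\,v^{s-2}+K)v$ for $K$ large enough, yields $v>0$ on $M$.

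The main technical obstacle I anticipate is the boundedness of the minimizing sequence in $H^2_1(M)$ when $s<2$: Hölder's inequality does not directly bound $\|v_i\|_2$ by $\|v_i\|_s$, and one must interpolate between $L^s$ and the Sobolev space $L^{p_n}$ to control $\|v_i\|_2$ by $\|v_i\|_{H^2_1}^{1-\theta}$ for some $\theta>0$, then absorb. Beyond this, the argument is entirely parallel to that of Lemma \ref{Subcritical}, with Lemma \ref{fSobolevembedding} supplying the crucial compactness at the otherwise forbidden exponent.
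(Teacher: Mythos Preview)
Your proposal is correct and follows essentially the same approach as the paper: the paper's proof consists of two sentences, stating that Lemma \ref{fSobolevembedding} gives compactness of $H^2_{1,f}(M)\hookrightarrow L^s(M)$ and that one then proceeds exactly as in Lemma \ref{Subcritical}. You have simply unpacked those two sentences in detail (and even flagged the $s<2$ boundedness issue, which the paper does not address explicitly).
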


\begin{proof} By Lemma \ref{fSobolevembedding} the inclusion map of $H^2_{1,f}(M)$ into $L^{s}(M)$ is compact. Then we proceed as in the proof of  Lemma  \ref{Subcritical}.
 \end{proof}
\begin{Remark}
Note that $p_n<2(n-k(f))/(n-k(f)-2)$ if $k(f)\geq 1$,  therefore  by Theorem \ref{eqcritical} we have solutions of the Yamabe equation with supercritical exponents.
\end{Remark}

We have the following corollary.

\begin{Corollary}\label{constantscalarcritical}  Let $(M,g,f)$ be an isoparametric system such that $k(f)\geq 1$. Then there exists a Riemannian metric in $[g]_f$ of constant scalar curvature. 
\end{Corollary}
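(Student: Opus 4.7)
The plan is to invoke Theorem \ref{eqcritical} at the critical Sobolev exponent $s = p_n = 2n/(n-2)$ and then convert the resulting positive $S_f$-solution into a conformal metric of constant scalar curvature via the scalar curvature identity \eqref{scalarcurvatureeq}.

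First, I would check that $s = p_n$ satisfies the exponent hypothesis of Theorem \ref{eqcritical}. If $k(f) \in \{n-1, n-2\}$ the first alternative in the theorem applies directly. Otherwise, for $1 \leq k(f) \leq n-3$, the required inequality $p_n < 2(n-k(f))/(n-k(f)-2)$ reduces after clearing denominators to $2k(f) > 0$, which holds by hypothesis; this is exactly the observation recorded in the remark following Theorem \ref{eqcritical}. Applying the theorem therefore yields a constant $c$ and a positive smooth function $u \in S_f$ with
$$L_g(u) = c\, u^{p_n - 1}.$$

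Second, I would set $h := u^{p_n-2} g$. Because $u \in S_f$ is smooth and strictly positive, $h$ is a genuine Riemannian metric lying in the $f$-conformal class $[g]_f$. The standard conformal scalar-curvature identity \eqref{scalarcurvatureeq} then gives
$$s_h = u^{1-p_n} L_g(u) = c,$$
which is constant, completing the argument. No serious obstacle is anticipated: the hard analytic work—turning the critical Yamabe variational problem on $S_f$ into an essentially compact one through the improved Sobolev embedding of Lemma \ref{fSobolevembedding}—has already been done inside Theorem \ref{eqcritical}, and the corollary is just its geometric rephrasing at the conformally significant exponent $p_n$.
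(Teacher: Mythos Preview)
Your proposal is correct and matches the paper's intended argument: the corollary is an immediate consequence of Theorem~\ref{eqcritical} applied at $s=p_n$ (justified exactly as in the Remark following that theorem), together with the scalar curvature identity~\eqref{scalarcurvatureeq}. The paper itself gives no separate proof beyond stating the corollary, so you have simply made explicit the one-step deduction that the paper leaves implicit.
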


\begin{proof}[Proof of Theorem \ref{NodalOrbitsgeq1}] The embedding $H^2_{1,f}(M)\subset L^{p_n}(M)$ is com\-pact, then the proof is essentially the same one of Theorem \ref{NodalYamabe}.

\end{proof}





\begin{thebibliography}{aa}


\bibitem{abresch} U. Abresch, {\it Isoparametric hypersurfaces with four or six distinct principal curvatures, Math. Ann}, {\bf 264} (1983), 283-302.

\bibitem{Ammann-Humbert} B. Ammann and E. Humbert, {\it The second Yamabe invariant},
Journal of Functional Analysis {\bf 235} (2006), 377-412.

\bibitem{A-F-P} K. Akutagawa, L. Florit, J. Petean, {\it  On
Yamabe constants of Riemannian products}, Comm. Anal. Geom.
{\bf 15} (2007), 947-969.


\bibitem{Aubin} T. Aubin, {\it Equations differentielles non-lineaires et
probleme de Yamabe concernant la courbure scalaire},
J. Math. Pures Appl. {\bf 55} (1976), 269-296.


\bibitem{Brendle-Marques1} S. Brendle, F. C. Marques, {\it Blow-up phenomena for the {Y}amabe equation. {II}},
   J. Differential Geom., {\bf 81}, (2009) 2:225-250.

\bibitem{Brendle-Marques2} S. Brendle, F. C. Marques, {\it Recent progress on the {Y}amabe problem},
 Surveys in geometric analysis and relativity, Adv. Lect. Math. (ALM), {\bf 20}, 29-47.


\bibitem{Cartan} E. Cartan, {\it Familles de surfaces isoperimetriques dans les
espaces  a courbure constante}, Ann. Mat. Pura Appl. {\bf 17} (1938), 177-191.

\bibitem{Cecil} T. Cecil, Q. S. Chi, G. Jensen, {\it Isoparametric hypersurfaces with 
four different principal curvatures}, Ann. Math. {\bf 166} (2007), 1-76.



\bibitem{Clapp} M. Clapp, J.  C. Fernandez,  {\it Multiplicity of nodal solutions to the {Y}amabe problem}, Calc. Var. Partial Differential Equations, {\bf 56} (2017), 5:145-22.

\bibitem{Chi1} Q. S. Chi, {\it Isoparametric hypersurfaces with four principal curvatures, {II}},  Nagoya Math. J.,  {\bf 204}, (2011), 1-18.

\bibitem{Chi2} Q. S. Chi, {\it Isoparametric hypersurfaces with four principal curvatures, {III}}, 
  J. Differential Geom., {\bf 94}, (2013),  3: 469-504.


\bibitem{Chi3} Q. S. Chi, {\it Isoparametric hypersurfaces with four principal curvatures,  {IV}}, to appear in J.  Differential Geom.
	

\bibitem{Ding} W. Y. Ding, {\it On a conformally invariant elliptic equation on $\re^n$}, Comm. Math. Phys. {\bf 107},  (1986), 2:331–335.  
  
  
\bibitem{delPino} M. del Pino, M.  Musso, F.  Pacard, A. Pistoia, {\it Large energy entire solutions for the {Y}amabe equation},
   J. Differential Equations, {\bf 251}, (2011), 9:2568-2597.  
    
\bibitem{Djadli-Jourdain} Z. Djadli and A.  Jourdain, {\it Nodal solutions for scalar curvature type equations with perturbations terms on compact Riemannian manifolds},
   Bollettino della Unione Matematica Italiana. Serie VIII. Sez. B. Art.  Ric. Mat. {\bf 5} (2002),  1:205-226.

   
\bibitem{Dobarro-LamiDozo} F. Dobarro, E. Lami Dozo, E, {\it Scalar curvature and warped products of {R}iemann manifolds}, Trans. Amer. Math. Soc., {\bf 303}, (1987), 1:161-168.   
   
   
\bibitem{ElSayed} S. El Sayed, {\it Second eigenvalue of the {Y}amabe operator and applications},
      Calc. Var. Partial Differential Equations {\bf 50} (2014),  3-4:665-692.


\bibitem{Fernandez-Petean} J. C. Fern\'andez, J. Petean, {\it Low energy nodal solutions of the Yamabe equation on the sphere}. arXiv:1807.06114. (2018)      
      
\bibitem{Ferus} D. Ferus, H. Karcher, H. F. M\"unzner, {\it Cliffordalgebren und neue isoparametrische Hyperflachen},
Math. Z. {\bf 177} (1981), 479-502.

\bibitem{Ge-Tang} J. Ge, Z.  Tang, {\it Isoparametric functions and exotic spheres}, J. Reine Angew. Math., {\bf 683} (2013), 161-180.


\bibitem{Ge-Tang2} J. Ge,  Z.  Tang, {\it Geometry of isoparametric hypersurfaces in Riemannian
              manifolds}, Asian J. Math., {\bf 18} (2014), 1:117-125.


              
\bibitem{Hebey} E. Hebey, {\it Nonlinear analysis on manifolds: {S}obolev spaces and
              inequalities}, Courant Lecture Notes in Mathematics, {\bf 5}, New York University, Courant Institute of Mathematical Sciences, New York; AMS, Providence, RI, (1999), pp.x+309.

\bibitem{Hebey-Vaugon} E. Hebey, M. Vaugon,  {\it Le problème de Yamabe équivariant}, Bull. Sci. Math. {\bf 117}, (1993), 241–286.

\bibitem{Hebey-Vaugon2} E. Hebey, M. Vaugon,  {\it Existence and multiplicity of nodal solutions for nonlinear
elliptic equations with critical Sobolev growth}, J. Funct. Anal., {\bf 119}, (1994), 2:298-318.              
              
 \bibitem{Hebey-Vaugon3} E. Hebey, M. Vaugon {\it Sobolev spaces in the presence of symmetries},
  J. Math. Pures Appl. (9), {\bf76}, (1997),
  10:859-881.     
              
              
 


              
\bibitem{Henry-Petean} G. Henry, J. Petean, {\it Isoparametric hypersurfaces and metrics of constant scalar
              curvature}, Asian J. Math., {\bf 18}, (2014), 1:53-67.

           
              
\bibitem{Henry-Madani} G. Henry, F. Madani, {\it The Equivariant second Yamabe constant}, J. Geom. Anal., {\bf 28} (2018) , 4:3747-3774.              
              
              
\bibitem{Holcman} D. Holcman, {\it Solutions nodales sur les varietes Riemannienes}, J. Funct. Anal. {\bf 161}, (1999), 1:219-245.              
              
\bibitem{Lawson} W. Y. Hsiang, H. B. Lawson, {\it Minimal submanifolds of low cohomogeneity}, J. Differential Geom. {\bf 5} , (1971), 1-38.             


\bibitem{Immervoll} S. Immervoll, {\it The classification of isoparametric hypersurfaces
with four distinct principal curvatures}, Ann. Math. {\bf 168} (2008), 1011-1024. 


\bibitem{Jourdain} A. Jourdain, {\it Solutions nodales pour des  equations du type courbure scalaire sur la sph\'ere}, Bull. Sci. Math.,  {\bf 123}, (1999), 299-327.



 \bibitem{Khuri-Marques-Schoen} M. A. Khuri, F. C.  Marques, R.  M. Schoen,  {\it A compactness theorem for the {Y}amabe problem}, J. Differential Geom. {\bf 81}, (2009), 1:143-196.              
              
              

\bibitem{Levi-Civita} T. Levi-Civita, {\it Famiglie di superficie isoparametrische nell'ordinario spacio euclideo}, Atti. Accad. naz. Lincei. Rend. Cl. Sci. Fis. Mat. Natur {\bf 26} (1937), 355-362.

\bibitem{Lohkamp1} J. Lohkamp, {\it The Higher Dimensional Positive Mass Theorem I}, arXiv:math/0608795.
 
\bibitem{Lohkamp2} J. Lohkamp, {\it  The Higher Dimensional Positive Mass Theorem II}, arXiv:1612.07505. 


\bibitem{Madani} F. Madani {\it Hebey-Vaugon conjecture II} C. R. Acad. Sci. Paris, Ser. I,  {\bf 350} (2012), 849–852.

\bibitem{MiyaokaAnnals} R. Miyaoka,  {\it Isoparametric hypersurfaces with {$(g,m)=(6,2)$}}, Ann. of Math. {\bf 177}, (2013), 1:53-110.

\bibitem{Miyaoka} R. Miyaoka, {\it Transnormal functions on a Riemannian manifold}, Differential Geom. Appl.,  {\bf 31},
      (2013),  1:130-139.

    
\bibitem{MiyaokaAnnals2} R. Miyaoka, {\it Errata of ``{I}soparametric hypersurfaces with
              {$(g,m)=(6,2)$}''}, Ann. of Math. (2), {\bf 183}, (2016), 3:1057-1071.     
      


\bibitem{Munzner} H. F. M\"unzner, {\it Isoparametrische Hyperflachen in spharen I},
Math. Ann. {\bf 251} (1980), 57-71.

\bibitem{Munzner2} H. F. M\"unzner, {\it Isoparametrische Hyperflachen in spharen II},
Math. Ann. {\bf 256} (1981), 215-232.





\bibitem{Ozeki-Takeuchi-I}  H. Ozeki, M. Takeuchi, {\it On some types of isoparametrics hypersurfeces in sphere I}, Tohoku Math. Journ. {\bf{27}}, (1975), 515-559.


\bibitem{Ozeki-Takeuchi-II}  H. Ozeki, M. Takeuchi, {\it On some types of isoparametrics hypersurfeces in sphere II}, Tohoku Math. Journ. {\bf{28}}, (1976), 7-55.

\bibitem{Petean2} J. Petean, {\it On nodal solutions of the {Y}amabe equation on products}, Journal of Geometry and Physics, {\bf 59} (2009), 10:1395-1401. 
    



\bibitem{Qian-Tang} C. Qian, Z. Tang, {\it Isoparametric functions on exotic spheres}, Adv. Math., {\bf 272}, (2015), 611-629. 

\bibitem{Schoen} R. Schoen,  {\it Conformal deformation of a Riemannian metric
to constant scalar curvature}, J. Differential Geom.  {\bf 20} (1984),
479-495.


\bibitem{Schoen-Yau} R. Schoen, S. T. Yau,  {\it Positive scalar curvature and minimal hypersurface singularities}, arXiv:1704.05490.
 
 
\bibitem{Segre} B. Segre, {\it Famiglie di ipersuperficie superficie isoparametrische negli spazi euclidei ad un 
qualunque numero di demesioni}, Atti. Accad. naz. Lincei. Rend. Cl. Sci. Fis. Mat. Natur {\bf 27} (1938), 203-207.
 

\bibitem{Stolz} S. Stolz,  {\it Multiplicities of Dupin Hypersurfaces} Invent. Math. {\bf 138}, (1999), 253-279. 



\bibitem{Trudinger} N. Tr\"udinger,  {\it Remarks concerning the conformal
deformation of Riemannian structures on compact manifolds}, Ann. Scuola
Norm. Sup. Pisa {\bf 22} (1968), 265-274.

\bibitem{Wang} Q. M. Wang, { \it Isoparametric functions on Riemannian Manifolds. I}, Math. Ann. {\bf{277}}, (1987), 639-646.



\bibitem{Yamabe} H. Yamabe,  {\it On a deformation of Riemannian structures
on compact manifolds}, Osaka Math. J. {\bf 12} (1960), 21-37.

\end{thebibliography}
\end{document}